\newtheorem{thm}{Theorem}[section]
\newtheorem{lem}[thm]{Lemma}
\newtheorem{prop}[thm]{Proposition}
\newtheorem{cor}[thm]{Corollary}
\newtheorem{rem}[thm]{Remark}
\newcommand{\NN}{\mathbb{N}}
\newcommand{\ZZ}{\mathbb{Z}}
\newcommand{\RR}{\mathbb{R}}
\newcommand{\eps}{\varepsilon}
\newcommand{\diam}{\text{diam}}
\begin{document}

\title{Upper bounds for Courant-sharp Neumann and Robin eigenvalues}

\author{K. Gittins\footnote{Department of Mathematical Sciences, Durham University, Mathematical Sciences and Computer Science Building, Upper Mountjoy Campus, Stockton Road, Durham, DH1 3LE, United Kingdom. \tt{katie.gittins@durham.ac.uk}}, C. L\'ena\footnote{Università degli Studi di Padova, Dipartimento di Tecnica e Gestione dei Sistemi Industriali (DTG), Stradella S. Nicola 3, 36100 Vicenza and  Dipartimento di Matematica ``Tullio Levi-Civita", via Trieste 63, 35121 Padova, Italy. \tt{corentin.lena@unipd.it}}}

\date{\today}

\maketitle

\begin{abstract}
We consider the eigenvalues of the Laplacian on an open, bounded, connected set in $\RR^n$ with $C^2$ boundary, with a Neumann boundary condition or a Robin boundary condition. We obtain upper bounds for those eigenvalues that have a corresponding eigenfunction which achieves equality in Courant's Nodal Domain theorem. In the case where the set is also assumed to be convex, we obtain explicit upper bounds in terms of some of the geometric quantities of the set.
~\\

\emph{Corrigendum.} A previous version of this work was accepted and published by the \emph{Bulletin de la Société Mathématique de France} (see \cite{GL2020} in the bibliography of Appendix \ref{app:corr}). It contained a gap: the classical (Euclidean) Faber-Krahn inequality was applied in a setting where it might not hold. This version reproduces the previous one with the addition of a corrigendum in Appendix \ref{app:corr} that addresses the issue. All the results in Sections 2--8 and most of those in Section 9 are thus preserved.
\end{abstract}

\paragraph{MSC classification (2010):}  	35P15, 49R05, 35P05.

\paragraph{Keywords:} Courant's Nodal Domain theorem, Neumann eigenvalues, Robin eigenvalues, Euclidean domains.

\section{Introduction}

\subsection{Statement of the problem}

Let $\Omega$ be an open, bounded, connected set in $\RR^n$, $n \geq 2$, with Lipschitz boundary $\partial \Omega$.
Consider the Neumann Laplacian acting on $L^2(\Omega)$ and note that it has discrete spectrum
since $\Omega$ is bounded. The Neumann eigenvalues of $\Omega$ can hence be written in a non-decreasing
sequence, counted with multiplicity,
\begin{equation*}
0=\mu_1(\Omega) < \mu_2(\Omega) \leq \dots \leq \mu_k(\Omega) \leq \dots,
\end{equation*}
where the only accumulation point is $+\infty$.

By Courant's Nodal Domain theorem, any eigenfunction corresponding to $\mu_k(\Omega)$ has at most $k$ nodal domains.
If $u_k$ is an eigenfunction corresponding to $\mu_k(\Omega)$ with $k$ nodal domains, then we call it a
Courant-sharp eigenfunction. In this case, we also call $\mu_k(\Omega)$ a Courant-sharp eigenvalue of $\Omega$.

The Courant-sharp property was first considered by Pleijel \cite{Pl} in 1956 for the Dirichlet Laplacian.
In particular, Pleijel proved that there are only finitely many Courant-sharp Dirichlet eigenvalues of
a bounded, planar domain with sufficiently regular boundary. See \cite{BM82, Pe57} for generalisations
of Pleijel's theorem to higher dimensions and other geometric settings. Following from Pleijel's result,
natural questions are, for a given domain, how many such eigenvalues are there and how large are they?

The recent articles \cite{BH16,vdBkG16cs} consider these questions and give upper bounds for the largest
Courant-sharp Dirichlet eigenvalue and the number of such eigenvalues in terms of some of the geometric
quantities of the underlying domain.
In Theorem 1.3 of \cite{BH16}, the authors bound this number using the area, perimeter, maximal curvature and minimal cut-distance to the boundary, for a set in $\RR^2$ which is sufficiently regular but not necessarily convex
(the cut-distance will be defined in Section~\ref{Sec3}).
In \cite{vdBkG16cs}, such geometric upper bounds are obtained for an open set in $\RR^n$ with finite Lebesgue measure.
In the case where the domain is convex, the upper bound given in Example 1 of \cite{vdBkG16cs} could be expressed in terms of the isoperimetric ratio of the domain. From this, one can deduce that if the domain has a large number of Courant-sharp Dirichlet eigenvalues then its isoperimetric ratio is also large.

It was shown recently in \cite{cL16} that if $\Omega$ is an open, bounded, connected set in $\RR^n$
with $C^{1,1}$ boundary, then the Neumann Laplacian acting in $L^2(\Omega)$ has finitely many Courant-sharp
eigenvalues (we refer to \cite{cL16} for a description of prior results). As mentioned in \cite{BH16}, the aforementioned questions are also interesting for the Courant-sharp eigenvalues
of the Neumann Laplacian.

\subsection{Goal of the article}

The aim of the present article is to obtain upper bounds for the Courant-sharp Neumann eigenvalues
in the case where $\Omega \subset \RR^n$ is open, bounded, connected with $C^2$ boundary.
In the case where $\Omega$ is also convex, we obtain explicit upper bounds for the Courant-sharp Neumann eigenvalues of $\Omega$, and for the number of such eigenvalues, in terms of some of the geometric quantities of $\Omega$.
These results correspond to some of those mentioned above for the Dirichlet case, with some
additional hypotheses due to the difficulties in handling the Neumann boundary condition.

We follow the same strategy that was used in \cite{cL16}. This involves distinguishing
between the nodal domains of a Courant-sharp eigenfunction $u$ for which the majority of the $L^2$ norm
of $u$ comes from the interior (bulk domains) and those for which the majority of the $L^2$ norm of $u$
comes from near the boundary (boundary domains), and then obtaining upper bounds for the number of each type
of nodal domain. In the first case, the argument used by Pleijel \cite{Pl}, which rests upon
the Faber-Krahn inequality, can be used as the eigenfunction in a bulk domain almost satisfies a
Dirichlet boundary condition.
For the boundary domains, it is not possible to employ the same argument as Pleijel as these nodal domains have mixed Dirichlet-Neumann boundary conditions so the Faber-Krahn inequality cannot be employed.
The strategy of \cite{cL16} to deal with the boundary domains is to locally straighten the boundary of the
domain $\Omega$ and then to reflect the nodal domain in order to obtain a new domain that almost satisfies
a Dirichlet boundary condition. One then has to compare the $L^2$ norm of the gradient of an eigenfunction
corresponding to a Courant-sharp eigenvalue on the boundary domain to the $L^2$ norm of the gradient of
the reflected eigenfunction on the reflected domain. See Section~\ref{secEstimates}.

We restrict our attention to Euclidean domains with $C^2$ boundary. We can then
make use of tubular coordinates in order to set up and describe the reflection procedure explicitly.
This allows us to keep explicit control of the constants appearing in the aforementioned estimates in order
to obtain estimates for the Courant-sharp Neumann eigenvalues.

In Proposition~\ref{propmubound}, we obtain an upper bound for the Courant-sharp Neumann eigenvalues of $\Omega$ in terms of some of its geometric quantities. More specifically, it depends on $|\Omega|$ the area of $\Omega$, $\rho(\Omega)$ the isoperimetric ratio to the power $1/2$, $ t_{+}(\Omega)$ the smallest radius of curvature of the boundary, and the cut distance to the boundary (see Section~\ref{Sec3} for precise definitions of the latter quantities).

A simpler presentation of this upper bound is possible in the case where $\Omega$ is convex, since one of the additional conditions in the general case is no longer required (see Section~\ref{Sec8}). In addition, we obtain an upper bound for the number of such eigenvalues by using the upper bound for the Neumann counting function which is proved in Appendix~\ref{sec:A}. In particular, we have the following proposition.

\begin{prop}\label{prop:1.2}
Let $\Omega$ be an open, bounded, convex set in $\RR^2$ with $C^2$ boundary.
There exist constants $C>0$ and $C'>0$, that do not depend on $\Omega$, such that for any Courant-sharp eigenvalue $\mu_k(\Omega)$,
\begin{equation}\label{eq:p1.2.1}	
	\mu_k(\Omega)\le C\left(\frac{|\Omega|}{t_+(\Omega)^4}+\frac{\rho(\Omega)^8}{|\Omega|}\right)
\end{equation}
and
\begin{equation}\label{eq:p1.2.2}
	k\le C'\left(\frac{|\Omega|^2}{t_+(\Omega)^4}+\rho(\Omega)^{8}\right).
\end{equation}
\end{prop}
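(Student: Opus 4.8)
The plan is to obtain both inequalities from results already established in the paper, specialised to the convex planar setting: \eqref{eq:p1.2.1} from the general eigenvalue bound of Proposition~\ref{propmubound}, and \eqref{eq:p1.2.2} by inserting \eqref{eq:p1.2.1} into the Weyl-type upper bound for the Neumann counting function proved in Appendix~\ref{sec:A}.

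First I would specialise Proposition~\ref{propmubound}. Its general bound is expressed through $|\Omega|$, $\rho(\Omega)$, $t_+(\Omega)$ and the cut-distance to $\partial\Omega$, the last of these entering because the reflection procedure requires tubular coordinates to be well defined on a collar of controlled width. The point of convexity is that this cut-distance hypothesis is then automatically available and the cut-distance is itself controlled by the remaining quantities, so the corresponding term can be absorbed. Tracking the powers coming from the bulk (Faber--Krahn) and boundary (reflection/curvature) contributions, the general estimate collapses to the two displayed terms $|\Omega|/t_+(\Omega)^4$ and $\rho(\Omega)^8/|\Omega|$, which yields \eqref{eq:p1.2.1} with a universal constant $C$.

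For the counting estimate I would use that a Courant-sharp $\mu_k(\Omega)$ is in particular the $k$-th Neumann eigenvalue, so $k\le N(\mu_k(\Omega))$, where $N$ is the Neumann counting function. Since the bound of Appendix~\ref{sec:A} is increasing in $\lambda$, substituting the upper bound from \eqref{eq:p1.2.1} for $\mu_k(\Omega)$ is legitimate. In two dimensions this bound has the form $N(\lambda)\le c_1|\Omega|\lambda + c_2|\partial\Omega|\sqrt{\lambda}+\dots$, and its leading term contributes
\[
|\Omega|\left(\frac{|\Omega|}{t_+(\Omega)^4}+\frac{\rho(\Omega)^8}{|\Omega|}\right)=\frac{|\Omega|^2}{t_+(\Omega)^4}+\rho(\Omega)^8,
\]
which is precisely the right-hand side of \eqref{eq:p1.2.2}. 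To handle the boundary correction I would write $|\partial\Omega|=\rho(\Omega)|\Omega|^{1/2}$, bound $\sqrt{\mu_k(\Omega)}$ using \eqref{eq:p1.2.1}, and then invoke Young's inequality together with the fact that the isoperimetric inequality forces $\rho(\Omega)\ge\sqrt{4\pi}>1$; this lets me absorb the boundary term into $|\Omega|^2/t_+(\Omega)^4+\rho(\Omega)^8$ and fix the constant $C'$.

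The step I expect to be the main obstacle is the convex specialisation of Proposition~\ref{propmubound}: one must verify that convexity genuinely eliminates the explicit cut-distance dependence rather than merely simplifying it, and then confirm that the powers of $\rho(\Omega)$ and $t_+(\Omega)$ recombine into exactly the two stated terms with a single universal constant. The counting step is then comparatively routine, the only care being to ensure that the subleading terms in the Weyl bound remain subordinate for all admissible convex $\Omega$, including the regime where $t_+(\Omega)$ is comparable to the diameter.
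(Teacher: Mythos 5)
Your proposal is correct and follows essentially the same route as the paper: inequality \eqref{eq:p1.2.1} comes from specialising the general Courant-sharp bound to the convex case (where $\delta_0(\Omega)=t_+(\Omega)$, Corollary~\ref{corVolTube} holds with $M=1$ for all $r>0$, and the side-length condition \eqref{eqIneqSide} is unnecessary) together with the isoperimetric inequality, and \eqref{eq:p1.2.2} follows by inserting \eqref{eq:p1.2.1} into the Neumann counting bound of Appendix~\ref{sec:A} (Corollary~\ref{corUBk}) and absorbing the lower-order terms via Young's inequality, exactly as in the proof of Proposition~\ref{prop:Noevalconvex}. The only slips are notational and harmless: the paper's $\rho$ is the \emph{square root} of the isoperimetric ratio, so $|\partial\Omega|=\rho(\Omega)^2|\Omega|^{1/2}$ and $\rho(\Omega)\ge\sqrt2\,\pi^{1/4}$ rather than your $\rho(\Omega)|\Omega|^{1/2}$ and $\sqrt{4\pi}$, and the paper's counting function uses strict inequality so that $k=N^N_\Omega(\mu_k)+1$ (handled in Corollary~\ref{corUBk} by an $\eps$-limit); in both cases the absorption argument goes through unchanged.
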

We note that the left-hand side and right-hand side of Inequality \eqref{eq:p1.2.1} have the same homogeneity
with respect to scaling, and that Inequality \eqref{eq:p1.2.2} is scaling invariant.
In addition, in Section~\ref{Sec8}, we obtain an upper bound for the Courant-sharp Neumann eigenvalues which also
depends upon the diameter of $\Omega$.

By Proposition~\ref{prop:1.2}, we then observe that if $\Omega$ is a sufficiently regular convex set with a large number of Courant-sharp eigenvalues, it has a large isoperimetric ratio or a large curvature at some point of its boundary (or both).
If we additionally assume that $\mu_k(\Omega)$ is large compared with $|\Omega|t_+(\Omega)^{-4}$, we can conclude that the isoperimetric ratio is large. We note that a large isoperimetric ratio is enough to generate a large number of Courant-sharp Neumann eigenvalues. Indeed, this is the case for a rectangle $(0,1) \times (0,L)$ with $L$ large.
By contrast, to the best of the authors' knowledge, it is not known whether a boundary point with large curvature alone can generate many Courant-sharp eigenvalues. It could be interesting to investigate this further.

By $-\Delta_{\Omega}^{\beta}$, we denote the Laplacian on $\Omega$ with the following Robin boundary condition
\begin{equation*}
\frac{\partial u}{\partial \nu} + \beta u = 0 \text{ on $\partial \Omega$, }
\end{equation*}
where $\frac{\partial u}{\partial \nu}$ is the exterior normal derivative and $\beta:\partial\Omega\to\RR$ is a non-negative, Lipschitz continuous function.
We denote the corresponding eigenvalues by $(\mu_k(\Omega, \beta))_{k\ge1}$.
It was shown in \cite{cL16} that there are finitely many Courant-sharp eigenvalues of $-\Delta_{\Omega}^{\beta}$.
By monotonicity of the Robin eigenvalues with respect to $\beta$, we obtain the same results for the
Courant-sharp Robin eigenvalues (see Subsection~\ref{ss:2.2}).

In addition, we obtain analogous results to those mentioned above for any dimension $n \geq 3$, namely Propositions \ref{propBoundGenNDim}, \ref{propConvNDim} and  \ref{propConvNDimSimple} in Section~\ref{SecNdim}.

\subsection{Organisation of the article}

In Section~\ref{Sec2}, we show that in order to obtain upper bounds for the largest Courant-sharp eigenvalue
$\mu$, it is sufficient to obtain upper bounds for the number of nodal domains and the remainder of the
Dirichlet counting function. Estimates for the latter are obtained in Section~\ref{secRemainder}.
To deal with the former, we first consider the 2-dimensional case and set up tubular coordinates in Section~\ref{Sec3}.
Following \cite{cL16}, we then define cut-off functions in Section~\ref{secCutOff} that allow us to distinguish between
bulk and boundary domains. In Subsection~\ref{ssecStraightening} we perform the straightening of the boundary procedure and obtain the desired estimates. We then use these estimates in  Subsection~\ref{ssecNoofnodaldomains} to obtain an
explicit upper bound for the number of Courant-sharp eigenvalues. In Subsection~\ref{ssecgeom}  and Subsection~\ref{ss:5.4}, by taking the geometry
of the domain into account, we improve the estimates from Subsection~\ref{ssecStraightening} in special cases.
We then combine all of the preceding results in Section~\ref{Sec7} to obtain an upper bound for the largest Courant-sharp eigenvalue. In Section~\ref{Sec8}, we obtain explicit upper bounds for the largest Courant-sharp eigenvalue and the number of Courant-sharp eigenvalues of an open and convex planar domain with $C^2$ boundary that involve some of its geometric quantities. In particular, we prove Proposition~\ref{prop:1.2}. In Section~\ref{SecNdim}, we obtain analogous results in arbitrary dimension $n \geq 3$. In Appendix~\ref{sec:A}, we prove an upper bound for the Neumann counting function of a convex set, which is used in the two preceding sections to control the number of Courant-sharp eigenvalues.

\section{Preliminaries}\label{Sec2}

\subsection{Strategy for Courant-sharp Neumann eigenvalues}\label{ss:2.1}

For $\mu>0$, we define the Neumann counting function as follows:
\begin{equation*}
	N_\Omega^{N}(\mu):=\sharp\{k\in\NN\,:\,\mu_k(\Omega)<\mu\}.
\end{equation*}

Let $(\lambda_k(\Omega))_{k\ge1}$ denote the Dirichlet eigenvalues of the Laplacian on $\Omega$. By the min-max characterisations of the Neumann and Dirichlet eigenvalues, we have, for $k \in \NN$, that
\begin{equation}\label{eq:monDN}
\mu_k(\Omega) \leq \lambda_k(\Omega).
\end{equation}
For $\mu>0$, we define the Dirichlet counting function:
\begin{equation*}
	N_\Omega^{D}(\mu):=\sharp\{k\in\NN\,:\,\lambda_k(\Omega)<\mu\},
\end{equation*}
and the corresponding \emph{remainder} $R_{\Omega}^{D}(\mu)$ such that
\begin{equation}
\label{eqDefRemainder}
	N_{\Omega}^{D}(\mu)=\frac{\omega_n \vert \Omega \vert}{(2\pi)^{n}}\mu^{n/2}-R_{\Omega}^{D}(\mu),
\end{equation}
where $\omega_n$ denotes the Lebesgue measure of the ball of radius $1$ in $\RR^n$, and the first term in the right-hand side of Equation \ref{eqDefRemainder} corresponds to Weyl's law.
By \eqref{eq:monDN}, we have
\begin{equation*}
N_{\Omega}^{N}(\mu) \geq N_{\Omega}^{D}(\Omega),
\end{equation*}
and therefore
\begin{equation*}
	N_{\Omega}^{N}(\mu) \geq \frac{\omega_n \vert \Omega \vert}{(2\pi)^{n}}\mu^{n/2}-R_\Omega^D(\mu).
\end{equation*}

Consider an eigenpair $(\mu,u)$ for the Neumann Laplacian, and denote by $\nu(u)$ the number of its nodal domains.
If $u$ is a Courant-sharp eigenfunction associated with $\mu>0$, $\mu=\mu_k(\Omega)$ with $\nu(u)=k$.
On the other hand, Courant's Nodal Domain theorem implies that $\mu_{k-1}(\Omega)<\mu_k(\Omega)$,
so that $N^N_\Omega(\mu)=k-1$. We therefore have
\begin{equation}
\label{eqBasicNC}
 N^N_\Omega(\mu)-\nu(u)<0.
\end{equation}

Hence, in order to obtain upper bounds for $\mu$, we require upper bounds for $\nu(u)$ and $R_\Omega^D(\mu)$.
These will be obtained in Sections~\ref{secEstimates}, \ref{secRemainder} respectively.

To obtain an upper bound for $\nu(u)$, we follow the strategy of \cite{cL16} in which
an important step is to straighten the boundary locally. By restricting our attention to domains with $C^2$ boundary,
we can make use of tubular coordinates to straighten the boundary which allow us to obtain the desired explicit
estimates in Subsection~\ref{ssecStraightening}.

\subsection{Application to Courant-sharp Robin eigenvalues}\label{ss:2.2}
Analogous arguments to the above hold for the Robin eigenvalues $(\mu_k(\Omega,\beta))_{k\ge1}$.
For $\mu>0$, we define the Robin counting function:
\begin{equation*}
	N_\Omega^{\beta}(\mu):=\sharp\{k\in\NN \,:\,\mu_k(\Omega,\beta)<\mu\}.
\end{equation*}
For $k \in \NN$, the Robin eigenvalues satisfy the following monotonicity property with respect to $\beta \geq 0$:
\begin{equation*}
\mu_k(\Omega) \leq \mu_k(\Omega, \beta) \leq \lambda_k(\Omega),
\end{equation*}
where $\beta \equiv 0$ gives rise to the Neumann eigenvalues and $\beta \to +\infty$ corresponds to the Dirichlet eigenvalues. We then have
\begin{equation*}
N_{\Omega}^{\beta}(\mu) \geq N_{\Omega}^{D}(\Omega),
\end{equation*}
and hence
\begin{equation*}
N_{\Omega}^{\beta}(\mu) \geq \frac{\omega_n \vert \Omega \vert}{(2\pi)^{n}}\mu^{n/2}-R_\Omega^D(\mu).
\end{equation*}
(See, for example, \cite[Section 4]{cL16}).

{\section{Tubular coordinates in 2D}\label{Sec3}}

For any $r>0$, we define the inner tubular neighbourhood of $\partial \Omega$ with radius $r$:
\begin{equation*}
	\partial\Omega_{r}^+:=\{x\in\Omega\,;\,\mbox{dist}(x,\partial\Omega)<r\},
\end{equation*}
and its volume
\begin{equation*}
	\tau(r):=|\partial\Omega_{r}^+|.
\end{equation*}
Let us now assume that $\Omega$ is simply connected  (we consider multiply connected domains at the end of Section \ref{secEstimates}).  Let $\gamma: [0,L]\to \RR^2$ be a closed, simple and $C^2$ curve parametrised by arc length, such that $\partial\Omega=\gamma([0,L])$. In particular, $L$ is the total length of $\partial\Omega$. For each $s\in [0,L]$, we write $\mathbf t(s):=\gamma'(s)$, the unit tangent vector at $\gamma(s)$, and denote by $\mathbf n(s)$ the unit vector such that $(\mathbf t(s), \mathbf n(s))$ is a direct orthonormal basis. Up to reversing the orientation of the curve $\gamma$, we can assume that $\mathbf n(s)$ points towards the interior of $\Omega$. The signed curvature of $\gamma$ at the point $\gamma(s)$ (relative to our choice of orientation), denoted by $\kappa(s)$, is then defined by
\begin{equation*}
	\mathbf t'(s)=\kappa(s)\,\mathbf n(s).
\end{equation*}

We define the mapping
\begin{equation}
\label{eqF}
	\begin{array}{cccc}
		F:&[0,L]\times \RR&\to    &\RR^2\\
		  &q:=(s,t)          &\mapsto&x:=\gamma(s)+t\,\mathbf n(s).	
	\end{array}
\end{equation}
The function $F$ is of class $C^1$, and its differential at $q:=(s,t)$, expressed from the base $((1,0),(0,1))$ to the base $(\mathbf t(s),\mathbf  n(s))$, is
\begin{equation}
\label{eqJacobi}
J(q)=\left(\begin{array}{cc}
		1-t\,\kappa(s)&0\\
		0             &1
	  \end{array}
\right).
\end{equation}
Following Subsection 3.1 of \cite{BH16}, we define
\begin{equation}
\label{eqCriticalt}
	t_+(\Omega):=\left(\sup_{s\in[0,L]}\left|\kappa(s)\right|\right)^{-1},
\end{equation}
and, for $s\in[0,L]$, the (internal) \emph{cut-distance} to $\partial\Omega$ at $\gamma(s)$:
\begin{equation}
\label{eqLocCut}
	\delta_+(s):=\sup\{\delta>0\,;\,\mbox{dist}(F(s,t),\partial\Omega)=t\mbox{ for all }t\in[0,\delta]\}.
\end{equation}
We set
\begin{equation}
\label{eqCut}
 \underline\delta_+(\Omega):=\inf_{s\in[0,L]}\delta_+(s)
\end{equation}
and
\begin{equation}
\label{eqDelta0}
	\delta_0(\Omega):=\min\{t_+(\Omega),\,\underline\delta_+(\Omega) \}.
\end{equation}

By construction, $F$ is a diffeomorphism of class $C^1$ from $(0,L)\times(0,\delta_0(\Omega))$ to $\partial\Omega_{\delta_0(\Omega)}^+\setminus \ell_0$, where $\ell_0$ is the segment  $F\left(\{0\}\times(0,\delta_0(\Omega))\right)$.

Given $f: \RR\to\RR$ continuous and piecewise $C^1$ and $\delta\le\delta_0(\Omega)$, we define the function $\varphi: \partial \Omega_{\delta}^+\setminus\ell_0\to \RR$ by $\varphi(x)=f(t)$, where $x=F(s,t)$. By definition of $\delta_0(\Omega)$, this can alternatively be written as $\varphi(x)=f\left(\mbox{dist}\left(x,\partial\Omega\right)\right)$.

\begin{prop} The function $\varphi$ is continuous, and is of class $C^1$ except on the regular arcs
\begin{equation*}
\Gamma_i =\{x\in \Omega\,;\,\mbox{dist}(x,\partial\Omega)=t_i\},
\end{equation*} where $\{t_i\,;\,1\le i\le N\}$ are the points of discontinuity of $f'$ in $(0,\delta_0(\Omega)]$. Furthermore,
\begin{equation}
\label{eqGrad}
	\nabla\varphi(x)=f'(t)\mathbf n(s),
\end{equation}
with $x=F(s,t)$, if $x\notin\bigcup_{i=1}^N\Gamma_i$.
\end{prop}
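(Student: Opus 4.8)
The plan is to play the two descriptions of $\varphi$ against each other: the intrinsic one, $\varphi(x)=f(t)$ with $x=F(s,t)$, and the geometric one, $\varphi(x)=f(\mbox{dist}(x,\partial\Omega))$. The first thing I would record is their equivalence. For $x=F(s,t)$ with $0<t<\delta\le\delta_0(\Omega)$, the chain of inequalities $\delta_0(\Omega)\le\underline\delta_+(\Omega)\le\delta_+(s)$ coming from \eqref{eqCut} and \eqref{eqDelta0}, combined with the definition \eqref{eqLocCut} of the local cut-distance, yields $\mbox{dist}(F(s,t),\partial\Omega)=t$; hence the two formulas for $\varphi$ coincide on $\partial\Omega_\delta^+\setminus\ell_0$, and moreover $\partial\Omega_\delta^+\setminus\ell_0=F\left((0,L)\times(0,\delta)\right)$. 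Continuity is then immediate from the geometric description: $x\mapsto\mbox{dist}(x,\partial\Omega)$ is $1$-Lipschitz on $\RR^2$, hence continuous, and $f$ is continuous by hypothesis, so $x\mapsto f(\mbox{dist}(x,\partial\Omega))$ is continuous on all of $\partial\Omega_\delta^+$ and a fortiori $\varphi$ is continuous on $\partial\Omega_\delta^+\setminus\ell_0$. I would stress that it is precisely this route, rather than composing with $F^{-1}$, that produces continuity across $\ell_0$, where $F^{-1}$ is itself discontinuous.

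For the $C^1$ assertion I would argue locally. Fix $x_0=F(s_0,t_0)$ with $t_0\notin\{t_1,\dots,t_N\}$. Since the $t_i$ are the only discontinuities of $f'$ in $(0,\delta_0(\Omega)]$, the function $f$ is $C^1$ on a neighbourhood of $t_0$. As $F$ is a $C^1$ diffeomorphism from $(0,L)\times(0,\delta_0(\Omega))$ onto $\partial\Omega_{\delta_0(\Omega)}^+\setminus\ell_0$, its inverse $G:=F^{-1}$ is $C^1$; writing $G(x)=(\sigma(x),\theta(x))$ we have $\varphi=f\circ\theta$ near $x_0$, a composition of $C^1$ maps, hence $C^1$. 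In passing I would check that each $\Gamma_i$ is a regular arc: it is the image of $[0,L]$ under $s\mapsto F(s,t_i)$, and $\partial_sF(s,t_i)=\left(1-t_i\kappa(s)\right)\mathbf t(s)$ is nonzero because $t_i<\delta_0(\Omega)\le t_+(\Omega)$ forces $|t_i\kappa(s)|<1$ by \eqref{eqCriticalt}.

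The gradient formula \eqref{eqGrad} is then a chain-rule computation carried out in the moving frame $(\mathbf t(s),\mathbf n(s))$. Putting $\tilde\varphi:=\varphi\circ F$, we have $\tilde\varphi(s,t)=f(t)$, so in the $(s,t)$-coordinates $D\tilde\varphi(q)=(0,f'(t))$. On the other hand $D\tilde\varphi(q)=D\varphi(x)\circ DF(q)$, where $DF(q)$ is represented in the frame $(\mathbf t(s),\mathbf n(s))$ by the matrix $J(q)$ of \eqref{eqJacobi}. Writing $\nabla\varphi(x)=a\,\mathbf t(s)+b\,\mathbf n(s)$ and pairing with $DF(q)v$ for an arbitrary $v=(v_s,v_t)$ gives $a\left(1-t\kappa(s)\right)v_s+b\,v_t=f'(t)\,v_t$ for all $v$. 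Since $1-t\kappa(s)\neq0$ (once more because $t<\delta_0(\Omega)\le t_+(\Omega)$), this forces $a=0$ and $b=f'(t)$, which is exactly \eqref{eqGrad}.

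I expect the only genuinely delicate point to be the bookkeeping for $DF(q)$: because $J(q)$ is expressed with respect to the moving orthonormal frame $(\mathbf t(s),\mathbf n(s))$ rather than the standard basis of $\RR^2$, one must pair the differential $D\varphi(x)$ against $DF(q)v$ in that frame, and the conclusion that the tangential component $a$ vanishes hinges squarely on the invertibility of $J(q)$, that is, on the bound $t<t_+(\Omega)$. Everything else reduces to the routine facts that a composition of $C^1$ maps is $C^1$ and that the distance function is Lipschitz.
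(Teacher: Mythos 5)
Your proof is correct and takes essentially the same route as the paper, whose entire argument is the one-line remark that the proposition follows from the chain rule together with the Jacobian expression \eqref{eqJacobi}; your write-up just makes the implicit steps explicit (equivalence of the two descriptions of $\varphi$, continuity via the $1$-Lipschitz distance function, local $C^1$-regularity through $F^{-1}$, and the moving-frame computation in which invertibility of $J(q)$ kills the tangential component). The only microscopic caveat is the endpoint case $t_i=\delta_0(\Omega)$, which the statement permits but your regularity check (using $t_i<\delta_0(\Omega)\le t_+(\Omega)$) excludes; the paper glosses over this as well, and it is irrelevant for the cut-off functions actually used later, whose $f'$ jumps at $\delta/4$ and $3\delta/4<\delta\le\delta_0(\Omega)$.
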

\begin{proof} This follows from the chain rule, using the expression for the Jacobian matrix given in Equation \ref{eqJacobi}.
\end{proof}

\section{Cut-off functions}
\label{secCutOff}

The purpose of this section is to define cut-off functions $\varphi_0^{\delta}, \varphi_1^{\delta}$
in order to characterise the nodal domains as bulk domains or boundary domains, as in Subsection 2.2 of \cite{cL16}.
The key point here is to obtain explicit estimates.

\vspace{10pt}

{\bf Step 1:} We construct two functions $\chi_0,\,\chi_1:\RR\to\RR$ which are continuous, piecewise $C^1$, and satisfy
\begin{enumerate}[i.]
	\item for $t\le \frac14$, $\chi_0(t)=0$ and $\chi_1(t)=1$;
	\item for $t\in \left[\frac14,\frac34\right]$, $0\le \chi_0(t),\,\chi_1(t)\le 1$;
	\item for $t \ge \frac34$, $\chi_0(t)=1$ and $\chi_1(t)=0$;
	\item $\chi_0^2+\chi_1^2=1$;
\end{enumerate}
We write $B:=\max(\|\chi_0'\|_{L^\infty},\|\chi_1'\|_{L^\infty})$.

We first construct $\psi:\RR\to \RR$, continuous, piecewise $C^1$ and non-decreasing, such that $\psi(0)=0$ and $\psi(1)=1$ and satisfying $0\le\psi\le1$. We ask for the additional condition
\begin{equation*}
	\psi(t)^2+\psi(1-t)^2=1
\end{equation*}
for all $t\in \RR$. One possible choice for $\psi$ is given at the end of this section.

We then set
\begin{align*}
	\chi_0(t)&:=\psi\left(2\left(t-\frac14\right)\right);\\
	\chi_1(t)&:=\psi\left(2\left(\frac34-t\right)\right).
\end{align*}
The functions $\chi_0$ and $\chi_1$ have the desired properties with $B=2\|\psi'\|_{L^\infty}$.

\vspace{10pt}

{\bf Step 2:} For each $\delta\in(0,\delta_0(\Omega)]$, we construct two functions $\varphi_0^{\delta},\,\varphi_1^{\delta}:\Omega\to \RR$ which are continuous and piecewise $C^1$ (the gradient is continuous except for finite jumps on regular arcs), and satisfy, for some positive constant $C$ independent of $\delta$,
\begin{enumerate}[i.]
	\item for $\mbox{dist}\left(x,\partial \Omega\right)\le \frac{\delta}4$, $\varphi_0^{\delta}(x)=0$ and $\varphi_1^{\delta}(x)=1$;
	\item for $\mbox{dist}\left(x,\partial \Omega\right)\in \left[\frac{\delta}4,\,\frac{3\,\delta}4\right]$, $0\le\varphi_0^{\delta}(x),\,\varphi_1^{\delta}(x)\le 1$;
	\item for $\mbox{dist}\left(x,\partial \Omega\right)\ge \frac{3\,\delta}4$, $\varphi_0^{\delta}(x)=1$ and $\varphi_1^{\delta}(x)=0$;
	\item $\left(\varphi_0^{\delta}\right)^2+\left(\varphi_1^{\delta}\right)^2=1$;
	\item $\left\|\nabla\varphi_0^{\delta}\right\|_{L^{\infty}},\,\left\|\nabla\varphi_1^{\delta}\right\|_{L^{\infty}}\le C\delta^{-1}$.
\end{enumerate}

The functions $\chi_0$ and $\chi_1$ of Step 1 being given, we define, for $x\in \partial\Omega_{\delta}^+$,
\begin{align*}
	\varphi_0^{\delta}(x)&:=\chi_0\left(\frac{t}{\delta}\right);\\
	\varphi_1^{\delta}(x)&:=\chi_1\left(\frac{t}{\delta}\right).
\end{align*}
We make the obvious extensions of $\varphi_0^{\delta}$ and $\varphi_1^{\delta}$ so that both functions are continuous. From the enumerated properties of $\chi_0$ and $\chi_1$ and Equation \eqref{eqGrad}, it follows that  $\varphi_0^{\delta}$ and $\varphi_1^{\delta}$ have the desired properties with $C=B$.

\vspace{10pt}

{\bf Explicit constants:}
In order to obtain explicit estimates in what follows, it is necessary to specify $\psi$.
One possible choice for $\psi$ is given by
\begin{equation*}
	\psi(t)^2:=\frac1{\int_0^1s(1-s)\,ds}\int_0^t s(1-s)\,ds,
\end{equation*}
for $t\in[0,1]$, extended by $0$ for $t\notin[0,1]$. More explicitly,
\begin{equation*}
	\psi(t)=\sqrt{3t^2-2t^3}
\end{equation*}
 for $t\in[0,1]$. In that case $\|\psi'\|_{L^\infty}=\sqrt3$. The functions $\chi_0$ and $\chi_1$ that we construct satisfy the listed properties with $B=2\sqrt3$, and the functions $\varphi_0^{\delta}$ and $\varphi_1^{\delta}$ with $C=2\sqrt3$.

\section{Estimates of the nodal count}
\label{secEstimates}

We wish to count the number of each type of nodal domain.
For the bulk domains, one considers a Pleijel-type argument via the Faber-Krahn inequality.
For the boundary domains, one reflects them in the boundary (after straightening it) and then applies the Faber-Krahn inequality to the reflected domains.
See Section 2 of \cite{cL16} for the full details and Subsection~\ref{ssecNoofnodaldomains} below.

\subsection{Straightening of the boundary}
\label{ssecStraightening}

We start by giving explicit versions of some estimates in Subsection 2.4 of Reference \cite{cL16}, in order to control some quantities of interest when we straighten the boundary using tubular coordinates.

\begin{lem}\label{lemEstInt}
Let $V$ be an open set
$V\subset (0,L)\times(0,3\,\delta_0(\Omega)/4)$ and let $U=F(V)$. There exist constants $0<m_-\le m_+$ such that, for any measurable and non-negative function $g:V\to \RR$,
\begin{equation*}
	m_-\int_Vg\,dq\le \int_U f\,dx \le m_+\int_Vg\,dq,
\end{equation*}
where $f=g\circ F^{-1}$. Furthermore, we can choose $m_-=1/4$ and $m_+=7/4$.
\end{lem}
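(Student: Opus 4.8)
The plan is to reduce the statement to the change of variables formula for $F$ together with a pointwise two-sided bound on its Jacobian determinant. Recall from the end of Section~\ref{Sec3} that $F$ restricts to a $C^1$ diffeomorphism from $(0,L)\times(0,\delta_0(\Omega))$ onto $\partial\Omega_{\delta_0(\Omega)}^+\setminus\ell_0$. Since $V\subset(0,L)\times(0,3\delta_0(\Omega)/4)$ and $3\delta_0(\Omega)/4<\delta_0(\Omega)$, the set $V$ lies inside this domain; moreover the excluded segment $\ell_0$ corresponds to $s=0$, which is not met because points of $V$ have $s\in(0,L)$. Hence $F|_V:V\to U$ is a $C^1$ diffeomorphism, and the change of variables theorem applies to the measurable, non-negative function $g$ (approximating by simple functions and using monotone convergence if one wishes to be scrupulous): with $f=g\circ F^{-1}$,
\begin{equation*}
\int_U f\,dx=\int_V g(q)\,|\det J(q)|\,dq,
\end{equation*}
where $J(q)$ is the Jacobian matrix from Equation~\eqref{eqJacobi}.

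By Equation~\eqref{eqJacobi}, $\det J(q)=1-t\,\kappa(s)$ for $q=(s,t)$, so the next step is to bound this quantity on $V$. From the definition~\eqref{eqCriticalt} of $t_+(\Omega)$, we have $|\kappa(s)|\le t_+(\Omega)^{-1}$ for every $s\in[0,L]$; and from~\eqref{eqDelta0}, $\delta_0(\Omega)\le t_+(\Omega)$. Consequently, for $q=(s,t)\in V$ one has $0<t<3\delta_0(\Omega)/4\le 3t_+(\Omega)/4$, whence
\begin{equation*}
|t\,\kappa(s)|<\frac{3t_+(\Omega)}{4}\cdot\frac1{t_+(\Omega)}=\frac34.
\end{equation*}
This gives $\tfrac14<1-t\,\kappa(s)<\tfrac74$; in particular the determinant is strictly positive, so $|\det J(q)|=1-t\,\kappa(s)\in\left(\tfrac14,\tfrac74\right)$ on $V$.

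Inserting these two-sided bounds into the change of variables identity and using $g\ge0$ then yields
\begin{equation*}
\frac14\int_V g\,dq\le\int_U f\,dx\le\frac74\int_V g\,dq,
\end{equation*}
which is the claimed inequality with $m_-=1/4$ and $m_+=7/4$. I expect no substantial obstacle: the only point requiring a little care is verifying that $V$ genuinely lies in the region where $F$ is a $C^1$ diffeomorphism, so that the change of variables formula is legitimate. This is precisely why the hypothesis restricts $t$ to $(0,3\delta_0(\Omega)/4)$ rather than to the full range $(0,\delta_0(\Omega))$, and the resulting factor $3/4$ is exactly what produces the clean constants $1/4$ and $7/4$.
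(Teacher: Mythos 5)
Your proof is correct and follows essentially the same route as the paper: apply the change of variables formula for $F$, note that the Jacobian determinant is $1-t\,\kappa(s)$ by Equation~\eqref{eqJacobi}, and bound $t\,|\kappa(s)|\le 3/4$ using $|\kappa(s)|\le t_+(\Omega)^{-1}$ together with $t<3\delta_0(\Omega)/4\le 3t_+(\Omega)/4$. The extra care you take (verifying that $V$ lies inside the region where $F$ is a diffeomorphism, and approximating $g$ by simple functions) is left implicit in the paper's two-line proof but changes nothing of substance.
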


\begin{proof} This follows directly from the change of variable formula
\begin{equation*}
	\int_Uf(x)\,dx=\int_Vg(q)\left(1-t\,\kappa(s)\right)\,dq,
\end{equation*}
and the fact that $V\subset (0,L)\times(0,3\,\delta_0(\Omega)/4)$, so that $t|\kappa(s)|\le 3/4$.
\end{proof}

By taking $g=1$, we deduce the following from Lemma \ref{lemEstInt}.

\begin{cor} \label{corVol}
If $V$ is an open set $V\subset (0,L)\times(0,3\,\delta_0(\Omega)/4)$ and $U=F(V)$, we have
\begin{equation}
\label{eqArea}
	m_-|V|\le|U|\le m_+|V|,
\end{equation}
where $m_+$, $m_-$ are the constants in Lemma \ref{lemEstInt}.
\end{cor}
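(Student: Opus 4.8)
The plan is to apply Lemma~\ref{lemEstInt} directly, specialising it to the constant function $g\equiv 1$. Since $V\subset (0,L)\times(0,3\,\delta_0(\Omega)/4)$ is open and $U=F(V)$, the geometric hypotheses of the lemma are already satisfied, and the constant function $g=1$ is manifestly measurable and non-negative on $V$, so it is an admissible test function and no further verification is required.

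First I would observe that with $g\equiv 1$ the associated function $f=g\circ F^{-1}$ is identically equal to $1$ on $U$. Consequently the two integrals appearing in the statement of Lemma~\ref{lemEstInt} collapse to Lebesgue measures: $\int_V g\,dq=\int_V 1\,dq=|V|$ on the one hand, and $\int_U f\,dx=\int_U 1\,dx=|U|$ on the other. Substituting these two identities into the double inequality $m_-\int_V g\,dq\le \int_U f\,dx\le m_+\int_V g\,dq$ furnished by the lemma yields $m_-|V|\le |U|\le m_+|V|$, which is precisely \eqref{eqArea}, with the same constants $m_-$ and $m_+$.

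I do not expect any genuine obstacle here, since the corollary is an immediate specialisation of Lemma~\ref{lemEstInt} to a single test function. The only minor point worth recording is that the change-of-variables identity underlying the lemma requires $g\ge 0$ and $V\subset (0,L)\times(0,3\,\delta_0(\Omega)/4)$ (so that the Jacobian factor $1-t\,\kappa(s)$ stays in $[1/4,7/4]$); both conditions hold trivially for $g\equiv 1$ on the given set $V$, and the explicit values $m_-=1/4$, $m_+=7/4$ carry over unchanged.
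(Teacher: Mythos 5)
Your proof is correct and is exactly the paper's argument: the corollary is obtained by applying Lemma~\ref{lemEstInt} with $g\equiv 1$, so that $\int_V g\,dq=|V|$ and $\int_U f\,dx=|U|$, and the constants $m_-=1/4$, $m_+=7/4$ carry over unchanged.
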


\begin{cor}
\label{corVolTube}
There exists a positive constant $M$ such that, for all $r\in(0,3\delta_0(\Omega)/4]$,
\begin{equation}
\label{eqVolTube}
	\tau(r)\le MLr.
\end{equation}
Furthermore, we can choose  $M=m_+$, where $m_+$ is the constant in Lemma \ref{lemEstInt}.
\end{cor}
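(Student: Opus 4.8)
The plan is to realize the tubular neighborhood $\partial\Omega_r^+$, up to a set of measure zero, as the $F$-image of a rectangle, and then to invoke the area estimate of Corollary~\ref{corVol}. Fix $r\in(0,3\delta_0(\Omega)/4]$ and set $V:=(0,L)\times(0,r)$. Since $r\le 3\delta_0(\Omega)/4$, we have $V\subset(0,L)\times(0,3\delta_0(\Omega)/4)$, so the hypotheses of Lemma~\ref{lemEstInt} and Corollary~\ref{corVol} are met for this choice of $V$, with $U:=F(V)$.

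The first step is to identify $U$ with $\partial\Omega_r^+$ modulo a null set. Recall that $F$ is a $C^1$ diffeomorphism from $(0,L)\times(0,\delta_0(\Omega))$ onto $\partial\Omega_{\delta_0(\Omega)}^+\setminus\ell_0$, where $\ell_0$ is the segment $F\left(\{0\}\times(0,\delta_0(\Omega))\right)$. Moreover, since $r\le\delta_0(\Omega)\le\underline\delta_+(\Omega)$, the defining property of the cut-distance in \eqref{eqLocCut} and \eqref{eqCut} yields $\mathrm{dist}(F(s,t),\partial\Omega)=t$ for every $(s,t)\in(0,L)\times(0,r)$. Consequently $F$ maps $V$ bijectively onto exactly those points of $\partial\Omega_{\delta_0(\Omega)}^+\setminus\ell_0$ whose distance to $\partial\Omega$ is strictly less than $r$; that is, $U=\partial\Omega_r^+\setminus\ell_0$. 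As $\ell_0$ is a segment in $\RR^2$, it has Lebesgue measure zero, so $\tau(r)=|\partial\Omega_r^+|=|U|$.

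To conclude, note that the rectangle has area $|V|=Lr$, whence Corollary~\ref{corVol} gives $\tau(r)=|U|\le m_+|V|=m_+Lr$, which is precisely \eqref{eqVolTube} with $M=m_+$.

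The only genuinely delicate point is the set-theoretic identification $U=\partial\Omega_r^+\setminus\ell_0$, which relies on the fact that on $(0,L)\times(0,\delta_0(\Omega))$ the tubular coordinate $t$ coincides with the distance to $\partial\Omega$; this is exactly what the choice $\delta_0(\Omega)=\min\{t_+(\Omega),\,\underline\delta_+(\Omega)\}$ in \eqref{eqDelta0} guarantees, since restricting to distances below $\underline\delta_+(\Omega)$ prevents the nearest-point projection from becoming multivalued. Once this identification is in place, the bound is an immediate consequence of the change-of-variables estimate already established in Lemma~\ref{lemEstInt}.
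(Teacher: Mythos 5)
Your proof is correct and follows essentially the same route the paper intends: apply the area estimate of Corollary~\ref{corVol} (i.e.\ Lemma~\ref{lemEstInt} with $g=1$) to the rectangle $V=(0,L)\times(0,r)$, whose image under $F$ is $\partial\Omega_r^+$ up to the null segment $\ell_0$. Your explicit justification that $F(V)=\partial\Omega_r^+\setminus\ell_0$, via the cut-distance condition $\delta_0(\Omega)\le\underline\delta_+(\Omega)$, is exactly the identification the paper leaves implicit.
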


\begin{prop} \label{propIneqQuot}
There exists a positive constant $K$ such that the following holds: if $V$ is an open set
$V\subset (0,L)\times(0,3\,\delta_0(\Omega)/4)$, $U=F(V)$, $u\in H^1(U)$ and $v:=u\circ F$, we have
\begin{equation}
\label{eqIneqQuot}
	\frac{\int_V\left|\nabla v\right|^2\,dq}{\int_Vv^2\,dq}\le K\,\frac{\int_U\left|\nabla u\right|^2\,dx}{\int_Uu^2\,dx}.
\end{equation}
Furthermore, we can choose $K=4m_+$, where $m_+$ is the constant in Lemma \ref{lemEstInt}.
\end{prop}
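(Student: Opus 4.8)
The plan is to transport both Rayleigh quotients to the coordinate rectangle $V$ via the change of variables formula, exploiting the explicit diagonal form of the Jacobian in Equation \ref{eqJacobi}. The numerator and denominator are treated separately: the denominator follows directly from Lemma \ref{lemEstInt}, while the numerator requires a pointwise analysis of how the gradient is deformed by $F$.

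First, since $F$ restricts to a $C^1$-diffeomorphism from $V$ onto $U$ (the excised segment $\ell_0$ has measure zero and so is irrelevant to the integrals) with Jacobian bounded away from $0$ and $\infty$, the composition $v=u\circ F$ lies in $H^1(V)$ and the chain rule holds almost everywhere. Writing $(\nabla u)(F(q))=a\,\mathbf t(s)+b\,\mathbf n(s)$ in the moving orthonormal frame, the chain rule together with Equation \ref{eqJacobi} gives $\partial_s v=(1-t\,\kappa(s))\,a$ and $\partial_t v=b$, so that
\[
	|\nabla v(q)|^2=(1-t\,\kappa(s))^2\,a^2+b^2,\qquad |(\nabla u)(F(q))|^2=a^2+b^2.
\]

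The crucial step is the pointwise estimate. On $V$ one has $t\,|\kappa(s)|\le 3/4$, hence $1-t\,\kappa(s)\in[1/4,\,7/4]$. Comparing the $a^2$ and $b^2$ terms separately, $(1-t\,\kappa)^2\le 4(1-t\,\kappa)$ because $1-t\,\kappa\le 7/4\le 4$, and $1\le 4(1-t\,\kappa)$ because $1-t\,\kappa\ge 1/4$; since $a^2,b^2\ge 0$ this yields
\[
	|\nabla v(q)|^2\le 4\,(1-t\,\kappa(s))\,|(\nabla u)(F(q))|^2.
\]
Integrating over $V$ and recognising the weight $(1-t\,\kappa)$ as exactly the Jacobian appearing in the change of variables formula used in the proof of Lemma \ref{lemEstInt} (applied to $g=|\nabla u|^2\circ F$, $f=|\nabla u|^2$), one obtains $\int_V|\nabla v|^2\,dq\le 4\int_U|\nabla u|^2\,dx$.

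Finally, applying Lemma \ref{lemEstInt} with $g=v^2$, so that $f=u^2$, gives $\int_U u^2\,dx\le m_+\int_V v^2\,dq$, i.e. $\int_V v^2\,dq\ge m_+^{-1}\int_U u^2\,dx$. Dividing the numerator bound by the denominator bound produces Inequality \eqref{eqIneqQuot} with $K=4m_+$. I expect the only genuine subtlety to be the bookkeeping in the gradient computation: because $F$ stretches the tangential direction by the factor $1-t\,\kappa$ while leaving the normal direction unchanged, the deformation is anisotropic, and one must split the estimate into its tangential and normal components rather than bounding $|\nabla v|^2$ by a single operator-norm factor, in order to land on the stated constant $K=4m_+$.
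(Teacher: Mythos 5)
Your proof is correct and follows essentially the same route as the paper: both arguments use the chain rule with the diagonal Jacobian of Equation \eqref{eqJacobi}, the bounds $1/4\le 1-t\,\kappa(s)\le 7/4$ applied separately to the tangential and normal components, the change of variables to identify the weight $1-t\,\kappa$ with the Jacobian, and Lemma \ref{lemEstInt} for the $L^2$ norms, yielding $K=4m_+$. Your pointwise inequality $|\nabla v|^2\le 4\,(1-t\,\kappa)\,|\nabla u|^2\circ F$ is just the paper's integrand estimate $\frac{1}{1-t\kappa}(\partial_s v)^2+(1-t\kappa)(\partial_t v)^2\ge\frac14|\nabla v|^2$ read in the opposite direction.
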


\begin{proof} From Lemma \ref{lemEstInt}, we have directly
\begin{equation*}
	\int_Uu^2\,dx\le m_+\int_Vv^2\,dq.
\end{equation*}
On the other hand, for any $q\in V$,
\begin{equation*}
	\nabla v(q)=J(q)^T\left(\nabla u\right)\circ F(q),
\end{equation*}
so that
\begin{equation*}
	\left|\nabla u\right|^2\circ F(q)=\left|\left(J(q)^T\right)^{-1}\nabla v(q)\right|^2.
\end{equation*}
From this we deduce, using the change of variable $x=F(q)$, that
\begin{multline*}
	\int_U\left|\nabla u\right|^2\,dx=\int_V\left(\frac1{1-t\,\kappa(s)}\left(\partial_s v(q)\right)^2+\left(1-t\,\kappa(s)\right)\left(\partial_t v(q)\right)^2\right)\,dq\ge\\
	\frac14\int_V \left|\nabla v\right|^2\,dq.
\end{multline*}
Putting together the two previous estimates, we obtain Inequality \eqref{eqIneqQuot}.
\end{proof}

\subsection{Number of nodal domains}
\label{ssecNoofnodaldomains}
To ease the notation in the computations that follow, we set $A:=|\Omega|$ in this section. Let us fix $\eps_0 \in (0,1)$. We consider an eigenpair $(\mu,u)$ for the Neumann Laplacian (as in Subsection 2.2 of \cite{cL16}) or for $-\Delta_{\Omega}^{\beta}$ (as in Section 4 of \cite{cL16})).
We define $u_0:=\varphi_0^\delta u$ and $u_1:=\varphi_1^\delta u$, so that
\begin{equation*}
	u^2=u_0^2+u_1^2.
\end{equation*}
Following Subsection 2.1 of \cite{cL16}, we say that a nodal domain $D$ of $u$ is a \emph{bulk domain} when
\begin{equation*}
\int_Du_0^2\,dx\ge (1-\eps_0)\int_Du^2\,dx
\end{equation*}
and a \emph{boundary domain} when
\begin{equation*}
\int_Du_1^2\,dx>\eps_0\int_Du^2\,dx.
\end{equation*}
We denote by $\nu_0(\eps_0,u)$ and $\nu_1(\eps_0,u)$ the number of bulk and boundary domains respectively, so that
\begin{equation*}
\nu(u)=\nu_0(\eps_0,u)+\nu_1(\eps_0,u).
\end{equation*}
We now need to bound $\nu_0(\eps_0,u)$ and $\nu_1(\eps_0,u)$ from above. Reproducing the argument in Subsection 2.3 of \cite{cL16} with $n=2$, we obtain
\begin{equation}
\label{eqIneqBulk}
	\nu_0(\eps_0,u)\le \frac{A}{\Lambda}\left(\frac{1+\eps_0}{1-\eps_0}\mu+\left(\frac{1+\frac{1}{\eps_0}}{1-\eps_0}\right)\frac{C^2}{\delta^2}\right),
\end{equation}
where $\Lambda$ is the first eigenvalue of the Dirichlet Laplacian on the disc of unit area and $C$ is the constant given in the construction of $\varphi_0^\delta$ and $\varphi_1^\delta$ in Section \ref{secCutOff}.

Let us now consider a boundary domain $D^1_j$  ($1\le j\le \nu_1(\eps_0,u)$). In a similar way to  Subsection 2.4 of \cite{cL16}, we define
\begin{equation*}
	U:=\{x\in D^1_j\,;\, u_1(x)\neq0\}\subset D^1_j\cap\partial\Omega_{\frac{3\delta}4}^+.
\end{equation*}
To simplify notation, we set $\tilde u:=u_1$. We consider $V=F^{-1}(U)$ and $\tilde v=\tilde u\circ F$. We  define
\begin{equation*}
	\sigma(s,t):=(s,-t)
\end{equation*}
and $V^R$ as the interior of the closure of $V\cup\sigma(V)$. We extend $\tilde v$ to a function $\tilde v^R$ on $V^R$ such that $\tilde v^R\circ \sigma=\tilde v^R$ (i.e. $\tilde v^R$ is even with respect to the variable $t$).
By the Faber-Krahn inequality,
\begin{equation*}
	\frac{\Lambda}{\left|V^R\right|}\le \frac{\int_{V^R}\left|\nabla \tilde v^R\right|^2\,dq}{\int_{V^R}\left|\tilde v^R\right|^2\,dq},
\end{equation*}
and therefore
\begin{equation*}
	\frac{\Lambda}{2\,\left|V\right|}\le \frac{\int_{V}\left|\nabla \tilde v\right|^2\,dq}{\int_{V}\left|\tilde v\right|^2\,dq}.
\end{equation*}

Applying Inequalities \eqref{eqArea} and \eqref{eqIneqQuot}, we obtain
\begin{equation}
\label{eqIneqFKBound}
	\frac{m_-\Lambda}{2\,\left|U\right|}\le K\frac{\int_{U}\left|\nabla \tilde u\right|^2\,dx}{\int_{U}\left|\tilde u\right|^2\,dx}.
\end{equation}
Let us note that in the computation leading to Inequality \eqref{eqIneqFKBound}, we implicitly assume that the boundary of $V$ is regular enough, so that $V^R$ is a connected open set and $\tilde v^R$ belongs to $H^1_0(V^R)$.  In order to avoid such an assumption, we can proceed as in Subsection 2.5 of \cite{cL16}: we perform the steps indicated above, in a  super-level set $V_{\alpha}=\{\tilde v^R>\alpha\}$ for some $\alpha>0$, and afterwards let $\alpha$ go to $0$. By Sard's theorem, we can find a sequence of $\alpha$'s tending to $0$ so that $V_\alpha$ is regular enough (this method was introduced in \cite{BM82}). Furthermore, as shown in Section 4 of \cite{cL16}, Inequality \eqref{eqIneqFKBound} also holds for an eigenfunction
$u$ of $-\Delta_{\Omega}^{\beta}$.

Since $\tilde u=u_1=\varphi_1^\delta\,u$, we find, after applying the Leibniz formula and Young's inequality,
\begin{equation*}
	\int_U\left|\nabla\tilde u\right|^2\,dx\le 2\,\int_{D^1_j}\left|\nabla u\right|^2\,dx+\frac{2\,C^2}{\delta^2}\int_{D^1_j}u^2\,dx,
\end{equation*}
and we have, by definition of a boundary domain,
\begin{equation*}
	\int_{U}\tilde u^2\,dx=\int_{D^1_j}u_1^2\,dx> \eps_0\,\int_{D^1_j}u^2\,dx,
\end{equation*}
for our choice of $\eps_0$.
Substituting this into Inequality \eqref{eqIneqFKBound}, we get
\begin{equation*}
	1\le \frac{4K\left|U\right|}{m_-\Lambda\,\eps_0}\left(\mu+\frac{C^2}{\delta^2}\right).
\end{equation*}
Summing over all boundary domains, we find
\begin{equation*}	\nu_1(\eps_0,u)\le\frac{4K}{m_-\Lambda\,\eps_0}\,\left|\partial\Omega_{\frac{3\delta}4}^+\right|\,\left(\mu+\frac{C^2}{\delta^2}\right).
\end{equation*}
By Inequality \eqref{eqVolTube} and since $\delta\in(0,\delta_0(\Omega)]$, we have
\begin{equation*}
	\left|\partial\Omega_{\frac{3\delta}4}^+\right|\le ML\frac{3\delta}4,
\end{equation*}
and therefore
\begin{equation}
\label{eqIneqBd}
	\nu_1(\eps_0,u)\le\frac{3KML\delta}{m_-\Lambda\,\eps_0}\,\left(\mu+\frac{C^2}{\delta^2}\right).
\end{equation}

In order to deduce from Inequalities \eqref{eqIneqBulk} and \eqref{eqIneqBd} a bound on $\nu(u)$ which is invariant by scaling, we set
\begin{equation*}
	\delta:=\left(\frac{A}{\mu}\right)^{1/4}.
\end{equation*}
We note that this choice of $\delta$ assumes that $\mu$ is large enough. That is, $\mu \geq A \delta_0(\Omega)^{-4}$.

For our choice of $\eps_0 \in (0,1)$, we therefore obtain that
\begin{multline}
\label{eqUpperNodal}
 \nu(u) \le
 \frac1\Lambda\left(\frac{1+\eps_0}{1-\eps_0}(A\mu)+\frac{3KM}{\eps_0 m_-}\rho^2(A\mu)^{3/4}+\right.\\ \left.C^2\frac{1+1/\eps_0}{1-\eps_0}(A\mu)^{1/2}+\frac{3KMC^2}{\eps_0 m_-}\rho^2(A\mu)^{1/4}\right),
\end{multline}
where the parameter $\rho$ is
the isoperimetric ratio for the domain $\Omega$ to the power $1/2$:
\begin{equation*}
\rho:=\frac{L^{1/2}}{A^{1/4}}.
\end{equation*}

\subsection{Geometry of the domain}\label{ssecgeom}

Let us first sketch how to extend the above results to a multiply connected domain. We now assume only that $\Omega$ is an open, bounded and connected set in $\RR^2$, with a $C^2$ boundary $\partial \Omega$. The set $\RR^2\setminus \overline\Omega$ has an unbounded connected component, which we denote by $D_0$. We define $b\in \NN\cup\{0\}$ as the number of bounded connected components of $\RR^2\setminus \overline\Omega$, which we denote
by $D_1,\dots,D_b$. For $h\in\{0,1,\dots,b\}$, $\partial D_h$ is a $C^2$-regular connected curve, and we denote its length by $L_h$. We set
\begin{equation*}
	s_h:=\sum_{i=0}^hL_i
\end{equation*}
for $h\in\{0,1,\dots,b\}$ and $s_{-1}:=0$. As in Section~\ref{Sec3}, we choose an arc-length parametrisation $\gamma_h:[s_{h-1},s_h]\to \RR^2$ of $\partial D_h$, oriented in such a way that for all $s\in [s_{h-1},s_h]$, the normal vector $\mathbf n_h(s)$ to $\gamma_h$ at $\gamma_h(s)$ points towards the inside of $\Omega$. Furthermore, we denote by $\kappa_h(s)$ the signed curvature of $\gamma_h$ at $\gamma_h(s)$. We keep the notation $L$ for the total length of the boundary $\partial \Omega$, so that
\begin{equation*}
	L=L_0+L_1+\dots+L_b.
\end{equation*}

We define
\begin{equation*}
	E=\bigcup_{h=0}^b(s_{h-1},s_h).
\end{equation*}
We give a natural generalisation of the definition of the function $F$ in Equation \eqref{eqF}: $F:E\times\RR\to \RR^2$ is now defined by
\begin{equation*}
	F(s,t):=\gamma_h(s)+\,t\mathbf n_h(s)
\end{equation*}
if $s\in (s_{h-1},s_h)$. Extending the definition given in Equation \eqref{eqCriticalt}, we write
\begin{equation*}
	t_+:=\left(\max_{h\in \{0,\dots,b\}}\sup_{s\in[s_{h-1},s_h]}|\kappa_h(s)|\right)^{-1},
\end{equation*}
while we continue to define $\underline\delta_+(s)$, $\underline\delta_+$ and $\delta_0(\Omega)$ by Equations \eqref{eqLocCut}, \eqref{eqCut} and \eqref{eqDelta0}. By reasoning separately on each of the connected components of $\partial\Omega$, we can show that $F$ is a $C^1$ diffeomorphism from $E\times(0,\delta_0(\Omega))$ to $\partial\Omega^+_{\delta_0(\Omega)}\setminus\bigcup_{h=0}^b\ell_h$, where the segment $\ell_h$ is defined by
\begin{equation*}
	\ell_h:=\gamma_h(s_{h-1})+(0,\delta_0(\Omega))\mathbf n_h(s_{h-1}).
\end{equation*}
Furthermore, the constructions of Section \ref{secCutOff} and the estimates of Section \ref{secEstimates} still hold.

Let us now discuss how to improve the estimates of Subsection~\ref{ssecStraightening} for domains with particular geometric properties. We have the following improvement of Corollary \ref{corVolTube}.
\begin{prop}
\label{propVolTubeGeom}
	If $\Omega$ is either simply connected or homeomorphic to an annulus, then, for all $\delta\in (0,\delta_0(\Omega)]$,
	\begin{equation*}
		\tau(\delta)\le L\delta.
	\end{equation*}
\end{prop}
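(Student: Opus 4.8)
The plan is to evaluate $\tau(\delta)$ exactly through the tubular change of variables and then to recognise the resulting correction term as (half of) the total signed curvature of $\partial\Omega$, which is pinned down by the Gauss--Bonnet theorem.

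First I would apply the change of variables formula, exactly as in the proof of Lemma~\ref{lemEstInt}, to the diffeomorphism $F$ from $E\times(0,\delta)$ onto $\partial\Omega^+_{\delta}$ (up to the finitely many segments $\ell_h$, which are null sets). Since $\delta\le\delta_0(\Omega)\le t_+$, the Jacobian factor $1-t\,\kappa(s)$ is nonnegative on the range $t\in(0,\delta)$, and taking $g\equiv1$ gives
\[
\tau(\delta)=\int_E\int_0^\delta\bigl(1-t\,\kappa(s)\bigr)\,dt\,ds,
\]
where $\kappa(s)$ is the signed curvature at $\gamma(s)$ (defined piecewise over the components of $\partial\Omega$). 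Computing the inner integral, $\int_0^\delta(1-t\,\kappa(s))\,dt=\delta-\tfrac{\delta^2}{2}\kappa(s)$, and using $\int_E ds=L$, I obtain the exact identity
\[
\tau(\delta)=L\delta-\frac{\delta^2}{2}\int_E\kappa(s)\,ds .
\]
The inequality $\tau(\delta)\le L\delta$ is therefore equivalent to the assertion that the total signed curvature $\int_E\kappa\,ds$ is nonnegative.

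The key step is to evaluate this total curvature under each topological hypothesis. Each component $\partial D_h$ is a simple closed $C^2$ curve, so by the theorem of turning tangents its total signed curvature is $\pm2\pi$, with the sign determined by the direction of traversal. Here the direction is fixed by our convention that $(\mathbf t_h,\mathbf n_h)$ is a direct frame with $\mathbf n_h$ pointing into $\Omega$: this forces the outer boundary $\partial D_0$ to be traversed counterclockwise, contributing $+2\pi$, while each hole boundary $\partial D_h$ with $h\ge1$ is traversed clockwise, contributing $-2\pi$. Consequently, if $\Omega$ is simply connected then $\int_E\kappa\,ds=2\pi$ and $\tau(\delta)=L\delta-\pi\delta^2\le L\delta$, while if $\Omega$ is homeomorphic to an annulus the two contributions cancel, $\int_E\kappa\,ds=0$, and $\tau(\delta)=L\delta$.

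The part requiring the most care is keeping track of these orientation and sign conventions, since a sign error in the turning-tangents count would invert the inequality. To fix the signs unambiguously I would check them on the concentric annulus, where the inward-normal convention gives curvature $+1/R$ on the outer circle of radius $R$ and $-1/r$ on the inner circle of radius $r$; there one can also compute $\tau(\delta)=L\delta$ directly by adding the areas of the two tubular annuli, which confirms both the vanishing of the total curvature and the sharpness of the bound in the annulus case.
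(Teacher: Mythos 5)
Your proposal is correct and follows essentially the same route as the paper: the exact tube-volume identity $\tau(\delta)=L\delta-\tfrac{\delta^2}{2}\int\kappa\,ds$ via the change of variables, then the theorem of turning tangents giving total curvature $2\pi$ in the simply connected case and a cancellation of $+2\pi$ and $-2\pi$ (from the inward-normal orientation convention) in the annulus case. Your explicit sign check on the concentric annulus is a nice sanity test but adds nothing beyond the paper's argument.
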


\begin{proof}	Let us first consider the simply connected case. From the change of variable $x=F(q)$, we obtain
\begin{equation*}
\tau(\delta)=\int_0^L\int_0^\delta\left(1-t\,\kappa(s)\right) \, dt \, ds
=L\,\delta-\frac{\delta^2}2\int_0^L\kappa(s)\,ds.
\end{equation*}
Since $\gamma$ is a simple, closed, positively-oriented curve,
\begin{equation*}
	\int_0^L\kappa(s)\,ds=2\,\pi,
\end{equation*}
giving the desired inequality (see for instance Corollary 9.5.2 of \cite{BG88}).
In the case where $\Omega$ is homeomorphic to an annulus, $\partial \Omega$ has two connected components. After a similar computation for each of them, we find
\begin{equation*}
\tau(\delta)=L\,\delta-\frac{\delta^2}2\left(\int_0^{s_0}\kappa_0(s)\,ds+\int_{s_0}^{s_1}\kappa_1(s)\,ds\right).
\end{equation*}
Since the normal vector has been chosen to point inwards, $\gamma_0$ and $\gamma_1$ have opposite orientations and the second term in the right-hand side vanishes. The desired inequality is an equality in this case.
\end{proof}

\begin{rem}\label{remSzNagy}
Suppose $\Omega$ is a multiply-connected domain that is homeomorphic to a disc with
$b \in \NN\cup \{0\}$ discs removed. By the Gauss-Bonnet theorem, we then have
\begin{equation*}
\int_{0}^{L} \kappa(s) \, ds = \int_{\partial \Omega} k_{g}(s) \, ds
=2\pi \chi(\Omega) = 2\pi(1-b),
\end{equation*}
where $\chi(\Omega)$ is the Euler Characteristic of $\Omega$ and $k_{g}$ denotes the geodesic curvature.
If the width of the interior tubular neighbourhood is sufficiently small, that is $\delta \le
\delta_0(\Omega)$, then we obtain
\begin{equation*}
\tau(\delta)= L\delta - \pi(1-b)\delta^2.
\end{equation*}
We recover the cases $b=0,1$ as discussed above.
If $b \geq 2$, then we have that
\begin{equation*}
\tau(\delta)= L\delta + \pi(b-1)\delta^2> L\delta,
\end{equation*}
for all $\delta \in(0,\delta_0(\Omega)]$. The quantity $L\delta$ is no longer an upper bound of $\tau(\delta)$, but is the leading order term for $\delta$ small.

A similar upper bound was obtained in \cite{SzN}, but there the width of the interior tubular neighbourhood
need only be smaller than the in-radius of the domain (see also \cite{CB83}).
\end{rem}

\subsection{Application to convex domains}\label{ss:5.4}
In this subsection, we assume that $\Omega$ is a convex domain.
We show that Lemma~\ref{lemEstInt} and Proposition~\ref{propIneqQuot} hold for convex domains, and give explicit constants in the corresponding bounds. We first make the following remark.

\begin{rem}\label{remconvex}
Let us assume that $\Omega$ is convex. Then $\delta_0(\Omega)=t_+(\Omega)$. Indeed, this is equivalent to saying that a ball of radius smaller than $t_+(\Omega)$ can roll freely inside the set $\Omega$.  This follows from a more general result that was established in \cite{BS89}, where this particular case is discussed on page 53, in answer to a question of J.A. Delgado \cite{Del}.  Furthermore, Inequality \eqref{eqVolTube} holds with $M=1$ for any $r>0$. Indeed, let us define the inner parallel set at distance $r>0$ by $\Omega_r:=\Omega\setminus \partial \Omega^+_r$. The set $\Omega_r$ is convex. We write $m(r):=|\Omega_r|$ and we have, by definition, $\tau(r):=|\Omega|-m(r)$ for all $r>0$. According to \cite{Mat}, the function $r\to m(r)$ is differentiable and $m'(r)=-|\partial \Omega_r|$. Therefore, for $r>0$,
\begin{equation*}
	\tau(r)=\int_0^r|\partial \Omega_t|\,dt.
\end{equation*}
The desired inequality then follows from the fact that the perimeter is non-decreasing with respect to inclusion among convex sets.  We note that both results in the present remark hold in arbitrary dimension.
\end{rem}

\begin{prop}\label{prop5.8}
If $\Omega \subset \RR^2$ is convex, then
\begin{enumerate}
  \item Lemma~\ref{lemEstInt} holds with $m_-=1/4$ and $m_+=1$. That is, if $V$ is an open set
$V\subset (0,L)\times(0,3\,t_{+}(\Omega)/4)$ and $U=F(V)$ with $F$ as defined in \eqref{eqF}, then for any measurable and non-negative function $g:V\to \RR$,
\begin{equation*}
	\frac14 \int_Vg\,dq\le \int_U f\,dx \le \int_Vg\,dq,
\end{equation*}
where $f=g\circ F^{-1}$.
  \item Proposition \ref{propIneqQuot} holds with $K=4$. That is, if $V$ is an open set
$V\subset (0,L)\times(0,3\,t_{+}(\Omega)/4)$, $U=F(V)$, $u\in H^1(U)$ and $v:=u\circ F$, we have
\begin{equation*}
	\frac{\int_V\left|\nabla v\right|^2\,dq}{\int_Vv^2\,dq}\le 4\,\frac{\int_U\left|\nabla u\right|^2\,dx}{\int_Uu^2\,dx}.
\end{equation*}
\end{enumerate}
\end{prop}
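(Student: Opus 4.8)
The plan is to revisit the proofs of Lemma~\ref{lemEstInt} and Proposition~\ref{propIneqQuot}, the only new input being the sign of the curvature in the convex setting. First I would invoke Remark~\ref{remconvex}, which gives $\delta_0(\Omega)=t_+(\Omega)$; hence the hypothesis $V\subset(0,L)\times(0,3\,t_+(\Omega)/4)$ is precisely the hypothesis $V\subset(0,L)\times(0,3\,\delta_0(\Omega)/4)$ needed in the earlier statements. The essential observation is that, with the orientation fixed in Section~\ref{Sec3} (the normal $\mathbf n(s)$ pointing into $\Omega$), convexity forces the signed curvature to be non-negative, $\kappa(s)\ge0$ for all $s$. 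Consequently the Jacobian factor $1-t\kappa(s)$ appearing in \eqref{eqJacobi} satisfies $0<1-t\kappa(s)\le1$ on $V$, rather than merely $1-t\kappa(s)\le 7/4$ as in the general case.

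For part (1), I would simply re-run the change of variables $\int_U f\,dx=\int_V g(q)\left(1-t\kappa(s)\right)\,dq$ from the proof of Lemma~\ref{lemEstInt}. The bound on $t$ gives $t|\kappa(s)|\le(3t_+(\Omega)/4)\cdot t_+(\Omega)^{-1}=3/4$, whence $1-t\kappa(s)\ge1/4$; combined with the new upper bound $1-t\kappa(s)\le1$ and the non-negativity of $g$, this yields $m_-=1/4$ and $m_+=1$.

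For part (2), I would follow the proof of Proposition~\ref{propIneqQuot} verbatim, feeding in the improved constant $m_+=1$. The volume comparison now reads $\int_U u^2\,dx\le\int_V v^2\,dq$. For the gradient, the identity
\[
\int_U\left|\nabla u\right|^2\,dx=\int_V\left(\frac{1}{1-t\kappa(s)}\left(\partial_s v\right)^2+\left(1-t\kappa(s)\right)\left(\partial_t v\right)^2\right)\,dq
\]
still holds; since $\kappa\ge0$ makes $1/(1-t\kappa(s))\ge1\ge1/4$, while $1-t\kappa(s)\ge1/4$ as above, the integrand is bounded below by $\tfrac14\left|\nabla v\right|^2$, giving $\int_U\left|\nabla u\right|^2\,dx\ge\tfrac14\int_V\left|\nabla v\right|^2\,dq$. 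Assembling the two estimates exactly as in the general case produces the quotient bound with $K=4\,m_+=4$.

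There is essentially no obstacle beyond the sign observation: once $\kappa\ge0$ is recorded, every inequality in the two earlier proofs goes through with the sharper value $m_+=1$, while the lower constant $m_-=1/4$ and the gradient coefficient $1/4$ are unchanged. The only point deserving care is to confirm the sign convention, namely that convexity together with the inward-pointing normal chosen in Section~\ref{Sec3} indeed yields $\kappa(s)\ge0$; this is immediate from the defining relation $\mathbf t'(s)=\kappa(s)\,\mathbf n(s)$, since the tangent of a convex boundary turns towards the interior.
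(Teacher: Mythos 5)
Your proposal is correct and follows essentially the same route as the paper: convexity with the inward-pointing normal gives $\kappa(s)\ge 0$, hence $1/4\le 1-t\,\kappa(s)\le 1$ for $t\in(0,3\,t_+(\Omega)/4)$, and these bounds fed into the change-of-variables formula yield $m_-=1/4$, $m_+=1$ and $K=4m_+=4$. The paper's proof is just a terser version of your argument, so the extra details you supply (the Jacobian identity for the gradient and the assembly of the Rayleigh-quotient bound) are exactly the right ones.
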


\begin{proof} Since $\Omega$ is convex, we have $\kappa(s)\ge0$ for all $s\in[0,L]$, so that
\begin{equation*}
	1/4\le1-t\,\kappa(s)\le 1
\end{equation*}
for all $t\in (0,3 t_{+}(\Omega)/4)$. Using these inequalities in the change of variable formula, we obtain the desired results.
\end{proof}

\section{An upper bound for $R_\Omega^D(\mu)$}
\label{secRemainder}

As discussed in Section~\ref{Sec2}, in order to obtain an upper bound for the Courant-sharp Neumann (or Robin)
eigenvalues, we require an upper bound for the remainder of the Dirichlet counting function $R_{\Omega}^D(\mu)$.

In \cite{vdBkG16cs}, the authors obtain an upper bound for $R_\Omega^D(\mu)$ by using an inner partition of $\Omega$ into squares of uniform side-length and Dirichlet bracketing. Suppose these squares have side-length $\ell$. Then, in the notation of the present article, Inequality (13) of \cite{vdBkG16cs} reads:
\begin{equation*}
R_{\Omega}^D(\mu) \leq \frac{\tau(\sqrt{2}\ell)}{4\pi}\mu + \frac{|\Omega|}{\sqrt{2}\ell} \mu^{1/2}.
\end{equation*}
Corollary~\ref{corVolTube} gives an upper bound for the volume of the inner tubular neighbourhood provided
that the width is sufficiently small.
Choosing $\ell>0$ so that $\sqrt2\ell\le 3\delta_0(\Omega)/4$, and applying Corollary~\ref{corVolTube}, we obtain
\begin{equation*}
R_{\Omega}^D(\mu) \le \frac{\sqrt2M\ell L}{4\pi |\Omega|}(|\Omega|\mu)+\frac{|\Omega|^{1/2}}{\sqrt2\ell}(|\Omega|\mu)^{1/2}.
\end{equation*}

We then use the freedom in the choice of $\ell$ to minimise the right-hand side, meaning that we set
\begin{equation*}
\ell:=\sqrt\frac{2\pi |\Omega|}{ML}\mu^{-1/4}
\end{equation*}
to get
\begin{equation}
\label{eqRemainder}
R_\Omega^D(\mu)\le \sqrt\frac{ML|\Omega|}{\pi}\mu^{3/4}=\sqrt{\frac{M}\pi}\rho(\Omega)(|\Omega|\mu)^{3/4},
\end{equation}
as soon as $\mu$ is large enough for $\sqrt2\ell\le3\delta_0(\Omega)/4$ to be satisfied.

Since we used Corollary~\ref{corVolTube} in this section, Inequality \eqref{eqRemainder} holds with $M=7/4$, for all $\Omega\subset\RR^2$ with a $C^2$ boundary, as soon as $\mu$ is large enough for the condition $\sqrt2\ell\le3\delta_0(\Omega)/4$ to be satisfied. If we additionally assume $\Omega$ to be simply connected or homeomorphic to an annulus, Proposition \ref{propVolTubeGeom} tells us that Inequality \eqref{eqRemainder} holds with $M=1$ as soon as $\mu$ is large enough so that $\sqrt2\ell\le\delta_0(\Omega)$. Finally, if we assume $\Omega$ to be convex,  Inequality \eqref{eqRemainder} holds with $M=1$ for all $\mu>0$.

\section{Upper bound for Courant-sharp eigenvalues}\label{Sec7}

We again set $A:=|\Omega|$ to ease the notation. Let us consider an eigenpair $(\mu,u)$ such that $u$ is a Courant-sharp eigenfunction.
We assume that $\mu$ is large enough that
\begin{equation}
\label{eqIneqLength}
	\delta=\left(\frac{A}\mu\right)^{1/4}\le \delta_0(\Omega)
\end{equation}
and
\begin{equation}
\label{eqIneqSide}
	\sqrt2\ell=\sqrt\frac{4\pi A}{ML}\mu^{-1/4}\le\frac34 \delta_0(\Omega).
\end{equation}

We recall that $\eps_0 \in (0,1)$ is fixed as in Subsection~\ref{ssecNoofnodaldomains}.
By substituting Inequality \eqref{eqUpperNodal}, Equation \eqref{eqDefRemainder} and Inequality \eqref{eqRemainder} into Inequality \eqref{eqBasicNC}, we obtain the necessary condition
\begin{multline}
\label{eqNC}
\left(\frac1{4\pi}-\frac{1+\eps_0}{(1-\eps_0)\Lambda}\right)(A\mu)-\left(\frac{3K M}{\eps_0 m_-\Lambda}\rho^2+\sqrt{\frac{M}\pi}\rho\right)(A\mu)^{3/4}\\ -C^2\frac{1+1/\eps_0}{(1-\eps_0)\Lambda}(A\mu)^{1/2}-\frac{3KMC^2}{\eps_0 m_-\Lambda}\rho^2(A\mu)^{1/4}<0.
\end{multline}
We can reformulate Inequality \eqref{eqNC} by saying that if $\mu$ is Courant-sharp and satisfies \eqref{eqIneqLength} and \eqref{eqIneqSide},
\begin{equation*}
f_\rho(\xi):=a_0\xi^3+a_1(\rho)\xi^2+a_2\xi+a_3(\rho)<0,
\end{equation*}
with
\begin{align*}
a_0&:=\left(\frac1{4\pi}-\frac{1+\eps_0}{(1-\eps_0)\Lambda}\right);\\
a_1(\rho)&:=-\left(\frac{3K M}{\eps_0 m_-\Lambda}\rho^2+\sqrt{\frac{M}\pi}\rho\right);\\
a_2&:=-C^2\frac{1+1/\eps_0}{(1-\eps_0)\Lambda};\\
a_3(\rho)&:=-\frac{3KMC^2}{\eps_0 m_-\Lambda}\rho^2;\\
\xi&:=(A\mu)^{1/4}.
\end{align*}
Therefore, we have that
\begin{equation*}
A\mu\le \xi^*(\rho)^4,
\end{equation*}
where $\xi^*(\rho)$ is the largest real zero of the function $\xi\mapsto f_\rho(\xi)$. We summarise the above discussion in the following proposition.

\begin{prop}\label{propmubound}
If $\mu$ is a Courant-sharp eigenvalue of the Neumann Laplacian on $\Omega$ or of $-\Delta_{\Omega}^{\beta}$,
we have
\begin{equation*}
\mu\le\max\left(\frac{4096\pi^2|\Omega|}{81M^2\rho(\Omega)^4\delta_0(\Omega)^4},\frac{|\Omega|}{\delta_0(\Omega)^4},\frac{\xi^*(\rho(\Omega))^4}{|\Omega|}\right).
\end{equation*}
\end{prop}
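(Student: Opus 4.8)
The plan is to convert the necessary condition \eqref{eqBasicNC} into a single polynomial inequality in $\mu$ and then to separate out the regime in which all the ingredients are actually available. First I would use $n=2$ and $\omega_2=\pi$ to write the Weyl term as $\frac{\omega_2|\Omega|}{(2\pi)^2}\mu=\frac{A\mu}{4\pi}$, so that the lower bound $N^N_\Omega(\mu)\ge\frac{A\mu}{4\pi}-R_\Omega^D(\mu)$ combined with \eqref{eqBasicNC} gives
\begin{equation*}
\frac{A\mu}{4\pi}-R_\Omega^D(\mu)-\nu(u)<0.
\end{equation*}
Substituting the nodal-count bound \eqref{eqUpperNodal} for $\nu(u)$ and the remainder bound \eqref{eqRemainder} for $R_\Omega^D(\mu)$ produces exactly Inequality \eqref{eqNC}, which, on setting $\xi:=(A\mu)^{1/4}$, reads $f_\rho(\xi)<0$. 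Since $f_\rho(0)=a_3(\rho)<0$ and $f_\rho(\xi)\to+\infty$ as $\xi\to+\infty$ whenever the leading coefficient $a_0$ is positive, the largest real root $\xi^*(\rho)$ exists and $f_\rho(\xi)<0$ forces $\xi\le\xi^*(\rho)$; thus $A\mu\le\xi^*(\rho)^4$, i.e. $\mu\le\xi^*(\rho)^4/|\Omega|$, the third entry of the maximum.

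The crucial caveat is that \eqref{eqUpperNodal} and \eqref{eqRemainder} were only established under the admissibility hypotheses \eqref{eqIneqLength} and \eqref{eqIneqSide}, so the conclusion of the previous paragraph is only conditional. The second step is therefore to translate each hypothesis into an explicit lower threshold on $\mu$. Solving \eqref{eqIneqLength}, namely $\delta=(A/\mu)^{1/4}\le\delta_0(\Omega)$, yields $\mu\ge|\Omega|/\delta_0(\Omega)^4$, the second entry of the maximum. Solving \eqref{eqIneqSide}, namely $\sqrt2\,\ell=\sqrt{4\pi A/(ML)}\,\mu^{-1/4}\le\tfrac34\delta_0(\Omega)$, yields $\mu\ge\tfrac{4096\pi^2A^2}{81M^2L^2\delta_0(\Omega)^4}$, and then rewriting $L^2=\rho^4A$ through $\rho=L^{1/2}/A^{1/4}$ converts this into $\mu\ge\tfrac{4096\pi^2|\Omega|}{81M^2\rho(\Omega)^4\delta_0(\Omega)^4}$, the first entry of the maximum.

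I would finish with the dichotomy: if a Courant-sharp $\mu$ violates \eqref{eqIneqLength} or \eqref{eqIneqSide}, then $\mu$ lies strictly below the corresponding threshold, hence below the second or the first entry of the maximum; and if $\mu$ satisfies both, the polynomial argument applies and bounds $\mu$ by the third entry. In every case $\mu$ is dominated by the maximum of the three quantities, which is the assertion. I expect the main obstacle to be bookkeeping rather than a new idea: carefully matching the two admissibility thresholds to the first two terms of the maximum, in particular the algebraic reduction via $\rho$. The one genuinely substantive point to verify is that $a_0=\tfrac1{4\pi}-\tfrac{1+\eps_0}{(1-\eps_0)\Lambda}>0$, which holds for $\eps_0$ small because $\Lambda>4\pi$ (the Pleijel inequality for the unit-area disc); this positivity is precisely what makes "$f_\rho(\xi)<0\Rightarrow\xi\le\xi^*(\rho)$" meaningful, and it is where the choice of $\eps_0$ and the value of $\Lambda$ really enter.
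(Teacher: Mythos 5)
Your proposal is correct and follows essentially the same route as the paper: the paper's proof is precisely the discussion preceding the proposition, namely assuming \eqref{eqIneqLength} and \eqref{eqIneqSide} to derive the cubic necessary condition \eqref{eqNC} in $\xi=(A\mu)^{1/4}$, concluding $A\mu\le\xi^*(\rho)^4$, with the first two entries of the maximum being exactly the thresholds below which $\mu$ must lie when either admissibility condition fails. Your explicit verification that $a_0=\frac1{4\pi}-\frac{1+\eps_0}{(1-\eps_0)\Lambda}>0$ (true for the tabulated choice $\eps_0=1/6$ since $\Lambda=\pi j_{0,1}^2>4\pi$) is a point the paper leaves implicit, and your algebra converting the two thresholds into the stated entries matches the paper's constants.
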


We note that the function $f_\rho$, and therefore also $\xi^*(\rho)$, only depend on the geometric properties of the domain $\Omega$ and the choice of $\eps_0$. Sections \ref{secCutOff} and \ref{secEstimates} allow us to specify several sets of possible choices of constants in the definition of $f_\rho$, according to the nature of the domain $\Omega$, which we always assume to be open, bounded and connected with a $C^2$ boundary. We summarise them in Table \ref{tabConst}, where $j_{0,1}$ denotes the smallest positive zero of the Bessel function of the first kind $J_0$.

\begin{table}
	\centering
	\caption{Possible choices for the constants\label{tabConst}}
	\begin{tabular}{|c|c|c|c|c|c|c|}
	\hline
	Nature of $\Omega$&$\Lambda$&$C$&$m_-$&$\eps_0$&$M$&$K$\\
	\hline
    General case&$\pi j_{0,1}^2$&$2\sqrt3$&$1/4$&$1/6$&$7/4$&$7$\\
       Simply or doubly connected&$\pi j_{0,1}^2$&$ 2\sqrt3$&$1/4$&$1/6$&$1$&$7$
       \\
    Convex&$\pi j_{0,1}^2$&$2\sqrt3$&$1/4$&$1/6$&$1$&$4$\\
    \hline
    \end{tabular}
\end{table}

\section{Geometric upper bounds for Courant-sharp eigenvalues of convex planar domains}\label{Sec8}
\subsection{Geometric upper bounds}\label{ss:8.1}

In this subsection, we suppose that $\Omega$ is an open, bounded, convex planar domain with $C^2$ boundary. To simplify the notation, we set $A:=\vert \Omega \vert$, $t_+:=t_+(\Omega)$ and $\rho := \rho(\Omega)$. Let $\mu$ be a Courant-sharp eigenvalue of the Neumann Laplacian on $\Omega$, or of $-\Delta_{\Omega}^{\beta}$. We assume that $\mu$ is large enough so that Inequality \eqref{eqIneqLength} is satisfied. We recall that $\eps_0 \in (0,1)$ is fixed as in Subsection~\ref{ssecNoofnodaldomains}.

In order to obtain upper bounds for a Courant-sharp eigenvalue $\mu$ in terms of some of the geometric
quantities of the domain $\Omega$, we rewrite Inequality \eqref{eqNC}, with the constants from the last row of Table~\ref{tabConst}, as follows.

\begin{multline*}
\left(\frac1{4\pi}-\frac{1+\eps_0}{(1-\eps_0)\Lambda}\right)(A\mu)\\
\leq \left(\frac{3K M}{\eps_0 m_-\Lambda}\rho^2+\sqrt{\frac{M}\pi}\rho +\frac{C^2}{( A\mu )^{1/4}}\frac{1+1/\eps_0}{(1-\eps_0)\Lambda}+\frac{3KMC^2}{\eps_0 m_-\Lambda}\frac{\rho^2}{(A\mu)^{1/2}}
\right)(A\mu)^{3/4},
\end{multline*}
which implies that
\begin{multline}
\label{eqNCmu1}
A\mu \leq \left(\frac1{4\pi}-\frac{1+\eps_0}{(1-\eps_0)\Lambda}\right)^{-4} \\
\times \left(\frac{3K M}{\eps_0 m_-\Lambda}\rho^2+\sqrt{\frac{M}\pi}\rho +\frac{C^2}{(A\mu)^{1/4}}\frac{1+1/\eps_0}{(1-\eps_0)\Lambda}
+\frac{3KMC^2}{\eps_0 m_-\Lambda}\frac{\rho^2}{(A\mu)^{1/2}}\right)^4.
\end{multline}

As $\Omega$ is convex, $\delta_0(\Omega) = t_{+}$ (see Remark~\ref{remconvex}). In particular, there exists a ball
of radius $t_+$ which is contained in $\Omega$, that we denote by $B_{t_{+}}$.
Then $A= \vert \Omega \vert \geq \vert B_{t_{+}} \vert = \pi t_{+}^2$, so $A t_{+}^{-2} \geq \pi$.
Together with Inequality \eqref{eqIneqLength}, we then have
\begin{equation*}
 A\mu \geq \left(\frac{A}{t_{+}^2}\right)^2 \geq \pi^2.
\end{equation*}
Substituting this into Inequality \eqref{eqNCmu1}, we obtain
\begin{equation}
\label{eqboundt*}
A\mu \leq D_2^{-4}
\left(\frac{288 \rho^2}{\Lambda} + \frac{\rho}{\pi^{1/2}} + \frac{504}{5 \pi^{1/2} \Lambda}
+ \frac{3456 \rho^2}{\pi \Lambda}\right)^4,
\end{equation}
which implies
\begin{equation}\label{eqdefL1}
\mu \leq D_2^{-4}A^{-1}
\left(\frac{288 \rho^2}{\Lambda} + \frac{\rho}{\pi^{1/2}} + \frac{504}{5\pi^{1/2} \Lambda}
+ \frac{3456 \rho^2}{\pi \Lambda}\right)^4 =: L_1(\rho, A),
\end{equation}
where $D_2= \frac1{4\pi}-\frac{7}{5\Lambda}$.

With Proposition~\ref{propmubound} and the preceding discussion in mind, we have that, if $\mu$
is Courant-sharp, then
\begin{equation}
\label{eqUBmu2D}	
\mu\le\max\left(\frac{A}{t_{+}^4}, L_1(\rho, A)\right).
\end{equation}
Note that since $\Omega$ is convex, Inequality \eqref{eqVolTube} holds for all $r>0$ (see Remark~\ref{remconvex}), so Inequality \eqref{eqIneqSide} is not necessary in the convex case.\\

In what follows, we obtain an upper bound for $\mu$ that involves some of the other geometric quantities
of $\Omega$. We consider Inequality \eqref{eqNCmu1} and use the fact that $\mu=\mu_k(\Omega) \geq \mu_2(\Omega)$
for $k \geq 2$. We can then make use of known geometric estimates for the first positive Neumann eigenvalue of $\Omega$.
This is sufficient for our purposes due to the monotonicity of the Robin eigenvalues with respect to the function $\beta$. That is, for $k \in \NN$, $\mu_k(\Omega) \leq \mu_k(\Omega,\beta)$.

From Inequality \eqref{eqNCmu1}, we have
\begin{equation}
\label{eqNCAmu}
A\mu \leq D_2^{-4}
\left(\frac{288 \rho^2}{\Lambda} + \frac{\rho}{\pi^{1/2}} + \frac{504}{5\Lambda (A\mu_2)^{1/4}}
+ \frac{3456 \rho^2}{\Lambda (A\mu_2)^{1/2}}\right)^4,
\end{equation}
which implies that
\begin{equation}
\label{eqNCmu}
\mu \leq D_2^{-4}
\left(\frac{288 \rho^2}{\Lambda A^{1/4}} + \frac{\rho}{\pi^{1/2}A^{1/4}} + \frac{504}{5\Lambda A^{1/2}\mu_2^{1/4}}
+ \frac{3456 \rho^2}{\Lambda A^{3/4}\mu_2^{1/2}}\right)^4.
\end{equation}

Since $\Omega$ is convex, we invoke the $2$-dimensional version of the classical inequality due to Payne and Weinberger, \cite{PW}, for the first positive Neumann eigenvalue of $\Omega$,
\begin{equation}
\label{eqPW}
\mu_2(\Omega) \geq \frac{\pi^2}{\diam(\Omega)^2},
\end{equation}
where $\diam(\Omega)$ denotes the diameter of $\Omega$, to obtain
\begin{align}
\label{eqmuPW}
\mu &\leq D_2^{-4}\left(\frac{288 \rho^2}{\Lambda A^{1/4}} + \frac{\rho}{\pi^{1/2}A^{1/4}} + \frac{504\diam(\Omega)^{1/2}}{5\Lambda \pi^{1/2}A^{1/2}} + \frac{3456 \rho^2 \diam(\Omega)}{\pi \Lambda A^{3/4}}\right)^4 \notag\\
& \ \ \ =: L_2(\rho, A, \diam(\Omega)).
\end{align}

By Proposition~\ref{propmubound} and the fact that $\Omega$ is convex, we have the following inequality:
\begin{equation*}	
\mu\le\max\left(\frac{A}{t_{+}^4},L_2(\rho, A, \diam(\Omega))\right).
\end{equation*}

\begin{rem}
 It is  also  possible to obtain an upper bound for the largest Courant-sharp eigenvalue of $\Omega$ involving only the geometric quantities $A$, $\rho$ and $t_{+}$ via the above approach by making use of Lemma 5.2 of \cite{M09} with $K=\Omega$ and $L$ a ball of radius $t_{+}$ that is contained in $\Omega$.
\end{rem}

We collect the preceding upper bounds in the following proposition.
\begin{prop} \label{prop:Nevalconvex}
Let $\Omega$ be an open, bounded and convex set with a $C^2$  boundary. If $\mu$ is a Courant-sharp eigenvalue of the Neumann Laplacian on $\Omega$ or of $-\Delta_{\Omega}^{\beta}$, then we have that
\begin{equation*}
\mu\le\max\left(\frac{\vert \Omega \vert}{t_{+}(\Omega)^4}, L_1(\rho(\Omega), \vert \Omega \vert)\right),
\end{equation*}
and
\begin{equation*}	
\mu\le\max\left(\frac{\vert \Omega \vert}{t_{+}(\Omega)^4}, L_2(\rho(\Omega), \vert \Omega \vert, \diam(\Omega))\right),
\end{equation*}
where the functions $L_1$ and $L_2$ are defined in \eqref{eqdefL1} and \eqref{eqmuPW} respectively.
\end{prop}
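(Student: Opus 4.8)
The plan is to specialise the necessary condition established in the derivation of Proposition~\ref{propmubound} to the convex setting, and then to close the resulting inequality in two different ways, one yielding $L_1$ and one yielding $L_2$. Throughout I use that convexity gives $\delta_0(\Omega)=t_+(\Omega)$ and that Inequality \eqref{eqVolTube} holds for all $r>0$ (Remark~\ref{remconvex}), so the auxiliary scaling condition \eqref{eqIneqSide} is not needed. I would first dispose of the case where \eqref{eqIneqLength} fails: if $(|\Omega|/\mu)^{1/4}>t_+(\Omega)$, then $\mu<|\Omega|/t_+(\Omega)^4$, and both asserted inequalities hold trivially since the right-hand side of each is at least $|\Omega|/t_+(\Omega)^4$. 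Hence I may assume \eqref{eqIneqLength} holds.

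Under \eqref{eqIneqLength}, the Courant-sharp hypothesis forces the necessary condition \eqref{eqNC}. Inserting the convex constants from the last row of Table~\ref{tabConst} and isolating the leading $A\mu$ term against the terms of orders $(A\mu)^{3/4}$, $(A\mu)^{1/2}$ and $(A\mu)^{1/4}$, then dividing by the leading coefficient $D_2=\frac1{4\pi}-\frac{7}{5\Lambda}$, produces Inequality \eqref{eqNCmu1}. The first bound then follows from a bootstrapping step: convexity gives $|\Omega|\ge\pi t_+(\Omega)^2$, so together with \eqref{eqIneqLength} one obtains $A\mu\ge(|\Omega|/t_+(\Omega)^2)^2\ge\pi^2$. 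Substituting this universal lower bound into the negative-power factors $(A\mu)^{-1/4}$ and $(A\mu)^{-1/2}$ on the right-hand side of \eqref{eqNCmu1} replaces them by explicit constants and yields the closed inequality \eqref{eqboundt*}, hence $\mu\le L_1(\rho(\Omega),|\Omega|)$ as in \eqref{eqdefL1}. Combined with the trivial case, this gives the first inequality of the proposition.

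For the second bound I would instead control the lower-order terms using spectral information. Since any Courant-sharp $\mu=\mu_k(\Omega)$ with $k\ge2$ satisfies $\mu\ge\mu_2(\Omega)$ (the case $k=1$ is $\mu_1=0$ and is excluded), I bound the powers of $A\mu$ in the negative-power terms of \eqref{eqNCmu1} below by the corresponding powers of $A\mu_2$, obtaining \eqref{eqNCAmu} and then \eqref{eqNCmu}. Applying the Payne--Weinberger inequality \eqref{eqPW}, $\mu_2(\Omega)\ge\pi^2/\diam(\Omega)^2$, valid for convex $\Omega$, converts the $\mu_2$-dependence into a diameter-dependence and produces $\mu\le L_2(\rho(\Omega),|\Omega|,\diam(\Omega))$ as in \eqref{eqmuPW}. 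Combined with the trivial case, this gives the second inequality.

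The main obstacle is the bootstrapping step: because \eqref{eqNCmu1} carries $\mu$ on both sides, one cannot directly read off an upper bound, and must first secure a lower bound on the product $A\mu$ (here $A\mu\ge\pi^2$ for $L_1$, or $\mu\ge\mu_2$ for $L_2$) before the inequality can be closed into an explicit estimate. Once that is in place, the remaining work is the explicit algebraic rearrangement of the cubic-type inequality with the convex constants, which is routine.
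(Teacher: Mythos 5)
Your proposal is correct and follows essentially the same route as the paper: case-splitting on \eqref{eqIneqLength} (with convexity giving $\delta_0(\Omega)=t_+(\Omega)$ and making \eqref{eqIneqSide} unnecessary), then closing \eqref{eqNCmu1} via the bootstrap $A\mu\ge\pi^2$ to get $L_1$, and via $\mu\ge\mu_2(\Omega)$ plus the Payne--Weinberger inequality to get $L_2$. The only detail you leave implicit is that for the Robin case the second bound also requires the monotonicity $\mu_k(\Omega)\le\mu_k(\Omega,\beta)$ in order to reduce to the Neumann lower bound $\mu\ge\mu_2(\Omega)$, a point the paper states explicitly.
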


Suppose that $\mu$ is large enough so that \eqref{eqIneqLength} holds.
Note that since $\Omega$ is convex, $\diam(\Omega) \leq \frac{L}{2}$ and so $\mu_2(\Omega) \geq \frac{4\pi^2}{L^2}$, by the Payne-Weinberger inequality.
Substituting this into \eqref{eqNCAmu}, we obtain
\begin{equation*}
\mu \leq \frac{\rho^4}{A D_2^4}
\left(\frac{288 \rho}{\Lambda} + \frac{1}{\pi^{1/2}} + \frac{504}{5(2\pi)^{1/2}\Lambda}
+ \frac{3456 \rho^3}{2\pi \Lambda}\right)^4
=: A^{-1} D_2^{-4}\rho^4 G(\rho)^4,
\end{equation*}
where $\rho \mapsto G(\rho)$ is an increasing function for $\rho \geq 0$.

In fact, in this convex case, we can deduce a simplified upper bound for $\mu$ and obtain a geometric
upper bound for the number of Courant-sharp eigenvalues.

\begin{prop} \label{prop:Noevalconvex}
Let $\Omega$ be an open, bounded and convex set with a $C^2$  boundary.
Let $\mu=\mu_k(\Omega)$ or $\mu=\mu_k(\Omega,\beta)$ where $\beta$ is a non-negative Lipschitz continuous function on $\partial \Omega$. There exist positive constants $C$ and $C'$ that do not depend on $\Omega$ such that,
if $\mu$ is Courant-sharp, then
\begin{equation*}
 \mu\le C\left(\frac{\vert \Omega \vert}{t_+(\Omega)^4}+\frac{\rho(\Omega)^8}{\vert \Omega \vert}\right),
\end{equation*}
and
\begin{equation*}
	k\le C'\left(\frac{\vert \Omega \vert^2}{t_+(\Omega)^4}+\rho(\Omega)^8\right).
\end{equation*}
\end{prop}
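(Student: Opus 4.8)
Throughout I write $A=|\Omega|$, $t_+=t_+(\Omega)$ and $\rho=\rho(\Omega)$, as in Subsection~\ref{ss:8.1}. The whole argument rests on one elementary observation: the isoperimetric inequality $L^2\ge 4\pi A$ gives $\rho^4=L^2/A\ge 4\pi$, so $\rho\ge\rho_0:=(4\pi)^{1/4}$ is bounded below by a universal constant. Consequently, for $0\le j\le 8$ one has $\rho^j\le\rho_0^{\,j-8}\rho^8$, and any constant is at most $\rho_0^{-8}\rho^8$; this is what will let me collapse every lower-order-in-$\rho$ contribution into a single $\rho^8$.

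For the eigenvalue estimate I would start from the first inequality of Proposition~\ref{prop:Nevalconvex}, $\mu\le\max(A t_+^{-4},L_1(\rho,A))$. The polynomial $P(\rho):=\frac{288\rho^2}{\Lambda}+\frac{\rho}{\pi^{1/2}}+\frac{504}{5\pi^{1/2}\Lambda}+\frac{3456\rho^2}{\pi\Lambda}$ inside $L_1$ has degree $2$, so by the previous paragraph $P(\rho)\le c_P\rho^2$ for a universal $c_P$, whence $L_1(\rho,A)=D_2^{-4}A^{-1}P(\rho)^4\le C_0\rho^8/A$. Using $\max(a,b)\le a+b$ for $a,b\ge0$ then gives $\mu\le A t_+^{-4}+C_0\rho^8/A\le C(A t_+^{-4}+\rho^8/A)$ with $C:=\max(1,C_0)$, which is the first asserted inequality. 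No separate treatment of small $\mu$ is needed: the maximum with $A t_+^{-4}$ already subsumes the two remaining terms of Proposition~\ref{propmubound} once their $\rho^{-4}$ factors are bounded using $\rho\ge\rho_0$.

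For the counting estimate I would use that a Courant-sharp $\mu=\mu_k(\Omega)$ satisfies $N^N_\Omega(\mu)=k-1$, so $k=N^N_\Omega(\mu)+1$; for $\mu=\mu_k(\Omega,\beta)$ the monotonicity $\mu_k(\Omega)\le\mu_k(\Omega,\beta)$ gives $N^\beta_\Omega(\mu)\le N^N_\Omega(\mu)$, so $k\le N^N_\Omega(\mu)+1$ as well, and the Neumann count suffices. I then insert the upper bound for the Neumann counting function of a convex set from Appendix~\ref{sec:A}, whose leading part is the Weyl term $\frac{A\mu}{4\pi}$ together with a perimeter correction of order $L\mu^{1/2}$ and a constant. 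Multiplying the eigenvalue bound by $A$ gives $A\mu\le C(A^2 t_+^{-4}+\rho^8)$, which controls the Weyl term. Writing the perimeter term as $L\mu^{1/2}=\rho^2(A\mu)^{1/2}$ and using $\sqrt{a+b}\le\sqrt a+\sqrt b$, it is at most $C^{1/2}\rho^2(A t_+^{-2}+\rho^4)=C^{1/2}(\rho^2 A t_+^{-2}+\rho^6)$; the mixed piece is split by Young's inequality, $\rho^2 A t_+^{-2}\le\tfrac12(\rho^4+A^2 t_+^{-4})$, while $\rho^4,\rho^6$ and the additive constants are absorbed into $\rho^8$ via $\rho\ge\rho_0$. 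Collecting everything yields $k\le C'(A^2 t_+^{-4}+\rho^8)$.

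The eigenvalue bound is essentially bookkeeping once $\rho\ge\rho_0$ is in hand, so I expect the counting bound to be the only delicate point. It requires knowing the precise shape of the Appendix~\ref{sec:A} estimate --- in particular that its surface correction scales like $L\mu^{1/2}$, giving the scale-invariant combination $\rho^2(A\mu)^{1/2}$ --- and it hinges on correctly distributing the one genuinely mixed term $\rho^2 A t_+^{-2}$, which is neither a pure power of $\rho$ nor a pure power of $A t_+^{-2}$, between the two summands $A^2 t_+^{-4}$ and $\rho^8$ on the right-hand side; the Young inequality above is exactly what makes this split work out with scale-invariant homogeneity.
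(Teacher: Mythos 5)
Your proposal follows the paper's own route: the first inequality is derived, exactly as in the paper, from $\mu\le\max\left(A t_+^{-4},L_1(\rho,A)\right)$ (Inequalities \eqref{eqboundt*} and \eqref{eqUBmu2D}) combined with the isoperimetric bound $\rho\ge\sqrt2\pi^{1/4}$ to collapse the degree-$2$ polynomial $P(\rho)$ to a multiple of $\rho^2$ (your $c_P$ is precisely the paper's explicit constant $C$); and the second inequality is derived, as in the paper, from the first one, the Appendix~\ref{sec:A} upper bound on the Neumann counting function (Corollary~\ref{corUBk}, with the Robin case handled by monotonicity as in Remark~\ref{remAppRobin}), and repeated applications of Young's inequality.

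There is one concrete slip to repair: you misquote the shape of the Appendix estimate. For $n=2$, Inequality \eqref{eqIneqUBN} reads
\begin{equation*}
N^N_\Omega(\mu)\le \frac{2}{\pi^2}A\mu+\frac{2}{\pi}L\mu^{1/2}+\frac{L}{t_+},
\end{equation*}
so besides the volume term (whose coefficient is $2/\pi^2$, not the Weyl constant $1/(4\pi)$ --- harmless, it only changes the value of $C'$) and the perimeter term $L\mu^{1/2}$ that you do treat, there is a third term $L/t_+$, which is \emph{not} a constant; your ``collecting everything'' step, taken literally, leaves it unaccounted for. It is, however, dispatched by exactly the Young-plus-isoperimetric trick you already use for the mixed piece: writing $L=\rho^2A^{1/2}$ gives
\begin{equation*}
\frac{L}{t_+}=\frac{\rho^2A^{1/2}}{t_+}\le\frac12\left(\rho^4+\frac{A}{t_+^{2}}\right)\le\frac12\rho^4+\frac14\left(\frac{A^2}{t_+^{4}}+1\right),
\end{equation*}
and $\rho^4$ and $1$ are absorbed into $\rho^8$ via $\rho\ge\rho_0$. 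With this one extra line your proof of the counting bound is complete and coincides with the paper's argument.
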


The first inequality of Proposition~\ref{prop:Noevalconvex} follows from Inequalities \eqref{eqboundt*} and \eqref{eqUBmu2D}, by using the Isoperimetric inequality $\rho\ge\sqrt2\pi^\frac14$. The second inequality follows from the first one and Corollary~\ref{corUBk}, together with Remark~\ref{remAppRobin}, after applying Young's inequality repeatedly.
Indeed, by following this procedure, we obtain
\begin{equation*}
C = D_2^{-4} \left(\frac{288}{\Lambda} + \frac{1}{\sqrt{2}\pi^{3/4}} + \frac{35064}{10 \pi \Lambda}\right)^4 \approx 8.98 \times 10^{17},
\end{equation*}
and $C' \leq C+1$.

We remark that this proves Proposition~\ref{prop:1.2} since the latter is Proposition~\ref{prop:Noevalconvex} restricted to the Neumann case.

\subsection{Application to the disc of unit area}

For the disc $D \subset \RR^2$ of unit area, we have $\rho(D)=(4\pi)^{1/4}$ and $\delta_0(D)= t_{+}(D)=\pi^{-1/2}$.
We wish to evaluate Proposition~\ref{propmubound}, so we must first calculate $\xi^*(\rho)$.
We use the last line of Table \ref{tabConst}.
The value of $\xi^*(\rho)$ can be computed either numerically or with Cardano's formula.
We find, for any Courant-sharp Neumann (or Robin) eigenvalue $\mu(D)$ of $D$,
\begin{equation*}
\mu(D) \le 2.67\times10^{17}.
\end{equation*}

It is well-known that the first positive Neumann eigenvalue of $D$ is $\pi (j_{1,1}')^2$ where $j_{1,1}'$ is the first positive zero of $J_1'$, the derivative of the Bessel function $J_1$. By substituting this into Inequality \eqref{eqNCmu}, we obtain that any Courant-sharp Neumann eigenvalue $\mu(D)$ of $D$ satisfies
\begin{equation*}
 	\mu(D) \leq 1.26\times 10^{20}.
\end{equation*}

We also have $\diam(D) = 2\pi^{-1/2}$. By Inequality \eqref{eqmuPW}, we have that any Courant-sharp Neumann (or Robin) eigenvalue $\mu(D)$ of $D$ satisfies
\begin{equation*}
	\mu(D) \leq 2.09 \times 10^{20}.
\end{equation*}

By comparing the two preceding upper bounds, we remark that we do not lose a great deal by appealing to the bound of Payne and Weinberger, even though it is not the natural choice for our situation. Indeed, equality is achieved in \eqref{eqPW} for a rectangle of fixed area with one side shrinking to $0$. Such shrinking behaviour is not possible in our situation since $\Omega$ contains a ball of radius $t_{+}(\Omega)>0$.

We now evaluate the first inequality of Proposition~\ref{prop:Nevalconvex} for the disc of unit area.
We obtain that any Courant-sharp Neumann (or Robin) eigenvalue $\mu(D)$ of $D$ satisfies
\begin{equation*}
	\mu(D) \leq 1.42 \times 10^{20},
\end{equation*}
which is better than the second inequality of Proposition~\ref{prop:Nevalconvex} for the disc of unit area,
but worse than using Inequality \eqref{eqNCmu} directly (see above).

In addition, Proposition~\ref{prop:Noevalconvex} applied to the disc of unit area gives that
any Courant-sharp Neumann (or Robin) eigenvalue $\mu(D)$ of $D$ satisfies
\begin{equation*}
	\mu(D) \leq 1.51 \times 10^{20},
\end{equation*}
and the number of such eigenvalues $k \leq 1.51 \times 10^{20}$.

For comparison with the above upper bounds, we recall that the third positive Neumann eigenvalue is the largest Courant-sharp Neumann eigenvalue as proved in \cite{HPS2}. In addition, $\mu_4(D) = \pi (j_{2,1}')^2 \approx 3.054^2 \pi < 30$ (where $j_{2,1}'$ is the first positive zero of $J_2'$, the derivative of the Bessel function $J_2$).
The Courant-sharp Robin eigenvalues of $D$ are not known (to date).

\section{Generalisation to arbitrary dimension}\label{SecNdim}

\subsection{Tubular neighbourhoods of the boundary}\label{SubsecTubesNdim}

We now assume that $\Omega$ is a bounded, open and connected set in $\RR^n$, with $n\ge 3$, such that $\partial \Omega$ is a $C^2$ submanifold of $\RR^n$. We extend the methods used in the previous sections to this situation.

To simplify the notation, we write $\Gamma$ for the submanifold $\partial \Omega$. As before, for $r>0$, we define the inner tubular neighbourhood with radius $r$:
\begin{equation*}
	\Gamma^+_r:=\{x\in\Omega\,;\,\mbox{dist}(x,\Gamma)<r\}.
\end{equation*}
 For all $x'\in \Gamma$, we denote by $\mathbf n(x')$ the outward unit normal vector at $x'$. In order to parametrise the inner tubular neighbourhoods, we introduce the normal bundle of $\Gamma$. Furthermore, for all $x'\in\Gamma$, we identify the one-dimensional vector space spanned by $\mathbf n(x')$ with the real line $\RR$. We therefore write the normal bundle as a trivial product:
\begin{equation*}
	N(\Gamma):=\Gamma\times\RR
\end{equation*}
and we define a $C^1$ mapping $F: N(\Gamma)\to \RR^n$ by
\begin{equation}
\label{eqFDimN}
	F(x',t)=x'-t\mathbf n(x').
\end{equation}

Let us recall that the mapping $x'\mapsto \mathbf n(x')$ from $\Gamma$ to $\mathbb S^{n-1}$ is known as the \emph{Gauss map}  (see for instance Subsection 2.5 of \cite{Sch}). Its differential is a symmetric linear endomorphism of $T_{x'}\Gamma$, the tangent space to $\Gamma$ at $x'$, seen as a  subspace of $\RR^n$. It is called the \emph{Weingarten endomorphism}, and we denote it by $W_{x'}$. Its eigenvalues are called the \emph{principal curvatures} of $\Gamma$ at $x'$. Let $(\mathbf e_1,\dots,\mathbf e_{n-1})$ be an orthonormal basis of $T_{x'}\Gamma$ consisting of eigenvectors of $W_{x'}$ and let $(\kappa_1(x'),\dots,\kappa_{n-1}(x'))$ be the associated principal curvatures. Then $\mathcal (\mathbf e_1,\dots,\mathbf e_{n-1},1)$ and $(\mathbf e_1,\dots,\mathbf e_{n-1},\mathbf n(x'))$ are orthonormal bases of $T_{(x',t)}N(\Gamma)$, identified with $T_{x'}\Gamma\oplus \RR$, and $\RR^n$ respectively, and the matrix of the differential $D_{(x',t)}F$ in those bases is
\begin{equation*}
\left(
\begin{array}{cccc}
1-t\kappa_1(x')&\cdots&0&0\\
\vdots&\ddots&\vdots&\vdots\\
0&\cdots&1-t\kappa_{n-1}(x')&0\\
0&\cdots&0& -1
\end{array}
\right).
\end{equation*}

We now define
\begin{equation*}
	t_+(\Omega):=\left(\sup_{x'\in\Gamma}\,\max_{i\in\{1,\dots,n-1\}}\left|\kappa_i(x')\right|\right)^{-1},
\end{equation*}
and, for any $x'\in\Gamma$, the (internal) \emph{cut-distance} to $\Gamma$ at $x'$:
\begin{equation*}
	\delta_+(x'):=\sup\{\delta>0\,;\,\mbox{dist}(F(x',t),\Gamma)=t\mbox{ for all }t\in[0,\delta]\}.
\end{equation*}
We set
\begin{equation*}
 \underline\delta_+(\Omega):=\inf_{x'\in\Gamma}\delta_+(x')
\end{equation*}
and
\begin{equation*}
	\delta_0(\Omega):=\min\{t_+(\Omega),\,\underline\delta_+(\Omega)\}.
\end{equation*}
By construction, $F$ is a diffeomorphism of class $C^1$ from $\Gamma\times(0,\delta_0(\Omega))$ to $\Gamma^+_{\delta_0(\Omega)}$.

\begin{lem}\label{lemIntegrals}
Let $V$ be an open set
in $\Gamma\times(0,\delta_0(\Omega))$ and let $U=F(V)$. For any measurable, non-negative function $g:V\to \RR$,
\begin{equation*}
	\int_U f(x)\,dx=\int_{\Gamma}\left(\int_0^{+\infty}\mathbf 1_{V}(x',t) g(x',t)\prod_{i=1}^{n-1}\left(1-t\kappa_i(x')\right)\,dt\right)\,dx'
\end{equation*}
where $f=g\circ F^{-1}$, $\mathbf 1_{V}$ is the characteristic function of $V$ and $dx'$ is the surface measure in $\Gamma$ induced by the Lebesgue measure in $\RR^n$.
\end{lem}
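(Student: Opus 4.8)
The plan is to derive this directly from the change of variables formula for the $C^1$ diffeomorphism $F:\Gamma\times(0,\delta_0(\Omega))\to\Gamma^+_{\delta_0(\Omega)}$ introduced above, using the explicit form of its differential $D_{(x',t)}F$ that was computed just before the statement. First I would invoke the standard change of variables theorem: since $F$ is a $C^1$ diffeomorphism on the open set $\Gamma\times(0,\delta_0(\Omega))$ and $U=F(V)\subset\Gamma^+_{\delta_0(\Omega)}$, for any measurable non-negative $f$ on $U$ we have
\begin{equation*}
\int_U f(x)\,dx=\int_V f\circ F(q)\,\left|\det D_qF\right|\,dq,
\end{equation*}
where $q=(x',t)$ and $dq$ is the product measure $dx'\,dt$ on $\Gamma\times\RR$.

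The key computational step is to read off the Jacobian determinant from the matrix of $D_{(x',t)}F$ displayed above. Because that matrix is diagonal in the chosen orthonormal bases (with diagonal entries $1-t\kappa_1(x'),\dots,1-t\kappa_{n-1}(x')$ and $-1$), its determinant is $-\prod_{i=1}^{n-1}(1-t\kappa_i(x'))$, and its absolute value is $\prod_{i=1}^{n-1}\left|1-t\kappa_i(x')\right|$. Since $V\subset\Gamma\times(0,\delta_0(\Omega))$ and $\delta_0(\Omega)\le t_+(\Omega)$, for all $(x',t)\in V$ we have $t\,|\kappa_i(x')|<1$, so that each factor $1-t\kappa_i(x')$ is strictly positive and the absolute value can be dropped. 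Substituting $f\circ F=g$ and writing the product measure as an iterated integral, with the characteristic function $\mathbf 1_V$ extending the inner $t$-integral from $(0,\delta_0(\Omega))$ to $(0,+\infty)$, yields exactly the stated formula. One should note that the orthonormality of the two bases is what guarantees the determinant of the linear map equals the Jacobian factor relating the measures $dx'\,dt$ and $dx$.

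There is essentially no hard analytic obstacle here: the statement is a direct consequence of the change of variables formula once the determinant of $D_{(x',t)}F$ is identified. The only point requiring a little care, which I would make explicit, is the measure-theoretic bookkeeping: the factor $dx'$ is the surface measure on $\Gamma$ induced by the Lebesgue measure on $\RR^n$, and under the identification of the normal bundle $N(\Gamma)$ with $\Gamma\times\RR$ the reference measure on $\Gamma\times(0,\delta_0(\Omega))$ is precisely the product of this surface measure with the one-dimensional Lebesgue measure in $t$; the diagonal form of $D_{(x',t)}F$ in the specified orthonormal bases ensures that the volume distortion is the product $\prod_{i=1}^{n-1}(1-t\kappa_i(x'))$ with no cross terms. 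The positivity of the factors, guaranteed by $\delta_0(\Omega)\le t_+(\Omega)$, removes the absolute value and completes the argument.
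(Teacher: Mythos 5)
Your proof is correct and takes essentially the same route as the paper: both reduce the lemma to the change of variables formula for the diffeomorphism $F$, with the Jacobian factor $\prod_{i=1}^{n-1}\left(1-t\kappa_i(x')\right)$ read off from the diagonal form of $D_{(x',t)}F$ in the orthonormal bases, positivity of each factor coming from $\delta_0(\Omega)\le t_+(\Omega)$, and Fubini yielding the iterated integral with $\mathbf 1_V$. The only difference is one of formalism: the paper justifies the measure-theoretic step (that $dx$ pulls back to $\prod_{i=1}^{n-1}\left(1-t\kappa_i(x')\right)dx'\otimes dt$ on the normal bundle) by citing the pullback and tube-formula machinery of Berger--Gostiaux and recomputing the density via a local parametrisation and the Weingarten map, whereas you invoke the Riemannian change-of-variables theorem directly; both hinge on exactly the same Jacobian computation.
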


\begin{proof} The proof is similar to the proof of Weyl's formula for the volume of tubes given in Chapter 6 of \cite{BG88}. First, we use Proposition 3.3.16 of \cite{BG88} to get
\begin{equation*}
	\int_Uf(x)\,dx=\int_Vg\,F^*(dx),
\end{equation*}
where $dx$ denotes the Lebesgue measure in $\RR^n$ and $F^*(dx)$ the pullback of $dx$ by the diffeomorphism $F$.

Then, we note that, up to a change of sign, the mapping $F$ defined in Equation \eqref{eqFDimN} is the canonical map defined in Equation  2.7.5 of \cite{BG88}. According to Equation 6.8.2 of \cite{BG88}, there exists a function $G$ defined on $\Gamma\times(0,\delta_0(\Omega))$, to be specified below, such that
\begin{equation*}
	F^*(dx)=G(x',t)\,dx'\otimes dt,
\end{equation*}
where $dx'\otimes dt$ denotes the product measure in $N(\Gamma)=\Gamma\times\RR$, with $dt$ the Lebesgue measure in $\RR$. From Equation 6.7.16 of \cite{BG88}, we obtain
\begin{multline*}
	\int_{V}g\,F^*(dx)=\int_{\Gamma\times\RR}\mathbf 1_V(x',t)g(x',t)G(x',t)\,dx'\otimes dt=\\ \int_{\Gamma}\left(\int_{\RR}\mathbf 1_V(x',t)g(x',t)G(x',t)\,dt\right)\,dx'.
\end{multline*}

It remains to give an explicit formula for $G(x',t)$. We fix $u\mapsto \mathbf x(u)$ a local parametrisation of $\Gamma$ such that $\mathbf x(0)=x'$ and, for $i\in\{1,\dots,n-1\}$,
\begin{equation*}
	\frac{\partial \mathbf x}{\partial u_i}(0)=\mathbf e_i.
\end{equation*}
From Equation 6.8.9 and Proposition 6.6.2 of \cite{BG88}, we obtain
\begin{equation*}
G(x',t)=\frac{\det\left(\Pi\left(\frac{\partial \mathbf x}{\partial u_1}(0)-t\frac{\partial\mathbf n\circ \mathbf x}{\partial u_1}(0)\right),\dots,\Pi\left(\frac{\partial \mathbf x}{\partial u_{n-1}}(0)-t\frac{\partial\mathbf n\circ \mathbf x}{\partial u_{n-1}}(0)\right)\right)}{\det\left(\Pi\left(\frac{\partial \mathbf x}{\partial u_1}(0)\right),\dots,\Pi\left(\frac{\partial \mathbf x}{\partial u_{n-1}}(0)\right)\right)},
\end{equation*}
where $\Pi$ is the orthogonal projection from $\RR^n$ to $T_{x'}\Gamma$. It follows from the chain rule that, for all $i\in\{1,\dots,n-1\}$,
\begin{equation*}
	\frac{\partial\mathbf n\circ \mathbf x}{\partial u_i}(0)=W_{x'}[\mathbf e_i]=\kappa_i(x') \mathbf e_i.
\end{equation*}
This finally implies
\begin{equation*}
	G(x',t)=\prod_{i=1}^{n-1}(1-t\kappa_i(x')).\qedhere
\end{equation*}
\end{proof}

Lemma \ref{lemIntegrals} allows us to extend the estimates of Section \ref{secEstimates} to higher dimensions in a straightforward manner. Indeed, Lemma \ref{lemEstInt}, Corollaries \ref{corVol} and \ref{corVolTube}, and Proposition \ref{propIneqQuot} hold with the constants given in Table \ref{tabConstndim}.
\begin{table}
	\centering
	\caption{Possible choices for the constants in $\RR^n$ \label{tabConstndim}}
	\begin{tabular}{|c|c|c|c|c|}
	\hline
	Nature of $\Omega$&$C$&$m_-$&$M$&$K$\\
	\hline
    General case&$2\sqrt3$&$1/4^{n-1}$&$(7/4)^{n-1}$&$7^{n-1}$\\
    Convex&$2\sqrt3$&$1/4^{n-1}$&$1$&$4^{n-1}$\\
    \hline
    \end{tabular}
\end{table}

\subsection{Upper bounds for Courant-sharp eigenvalues}

To simplify the formulas, we write $V:=|\Omega|$ and $S:=|\partial \Omega|$ (the total $(n-1)$-dimensional surface measure of $\partial \Omega$) in the rest of this section. We denote by $N_\Omega$ either $N_\Omega^N$ or $N_\Omega^\beta$, since the lower bound that we derive holds for both types of counting function.

Following \cite{HPS2}, we define
\begin{equation*}
	\gamma(n):=\frac{(2\pi)^n}{\omega_n\Lambda(n)^\frac{n}2},
\end{equation*}
and we recall that $\gamma(n)<1$ for all integers $n\ge2$.
For the rest of this section, we set
\begin{equation}
\label{eqEpsNDim}
	\eps_0:=\frac12\cdot\frac{1-\gamma(n)^\frac2n}{1+\gamma(n)^\frac2n}.
\end{equation}
This choice of $\eps_0$ will be explained below.

We consider $(\mu,u)$, an eigenpair of the Neumann Laplacian on $\Omega$ or of $-\Delta_\Omega^\beta$, with $\mu$ large enough, in a sense to be made precise.

Following the steps of Subsection \ref{ssecNoofnodaldomains}, we obtain an upper bound  for the number of bulk domains
\begin{equation*}
	\nu_0(\eps_0,u)\le \frac{V}{\Lambda(n)^{\frac{n}2}}\left(\frac{1+\eps_0}{1-\eps_0}\mu+\left(\frac{1+\frac{1}{\eps_0}}{1-\eps_0}\right)\frac{C^2}{\delta^2}\right)^{\frac{n}2},
\end{equation*}
where $\Lambda(n)$ is the first eigenvalue of the Dirichlet Laplacian on a ball of volume $1$ in $\RR^n$.
More explicitly,
\begin{equation*}
\Lambda(n):=\omega_n^\frac2n j_{\frac{n}2-1,1}^2,
\end{equation*}
where $\omega_n$ denotes the volume of a ball of radius $1$ in $\RR^n$ and $j_{\frac{n}2-1,1}$ the smallest positive zero of the Bessel function of the first kind $J_{\frac{n}2-1}$.
We also obtain the following upper bound for the number of boundary domains
\begin{equation*}	
	\nu_1(\eps_0,u)\le \frac{3\cdot2^{\frac{n}2-1}K^{\frac{n}2}SM\delta}{m_-\Lambda(n)^{\frac{n}2}\eps_0^{\frac{n}2}}\left(\mu+\frac{C^2}{\delta^2}\right)^{\frac{n}2}.
\end{equation*}
Both bounds hold assuming $ \delta\in(0,\delta_0(\Omega)]$. Let us now set
\begin{equation*}
	\delta:=\left(\frac{V^{\frac2n}}{\mu}\right)^{\frac14}.	
\end{equation*}
It follows from the two previous upper bounds that, if $\mu$ is large enough so that
\begin{equation}
\label{eqBoundI}
	\left(\frac{V^{\frac2n}}{\mu}\right)^{\frac14}\le \delta_0(\Omega),
\end{equation}
then we have the following upper bound for the nodal count
\begin{multline}
\label{eqUpperNodalDimN}
\nu(u)\le \frac1{\Lambda(n)^{\frac{n}2}}\left(\left(\frac{1+\eps_0}{1-\eps_0}\xi+\frac{1+\frac{1}{\eps_0}}{1-\eps_0}C^2\xi^{1/2}\right)^{\frac{n}2}+\right.\\
\left.\frac{3\cdot2^{\frac{n}2-1}MK^{\frac{n}2}\rho^2}{m_-\eps_0^{\frac{n}2}}{\xi^{-\frac14}}
\left(\xi+C^2\xi^{1/2}\right)^{\frac{n}2}
\right),
\end{multline}
where $\xi:=V^{2/n}\mu$ and $\rho:=S^{1/2}/V^{1/2-1/2n}$.

An upper bound for the remainder $R_\Omega^D(\mu)$ is given in Section 2 of \cite{vdBkG16cs} for arbitrary dimension. Repeating the argument in Section \ref{secRemainder}, we obtain, for any $\ell\in(0,3(4\sqrt{n})^{-1}\delta_0(\Omega))$,
\begin{equation*}
	R_\Omega^D(\mu)\le \frac{\omega_nMS \sqrt{n}\ell}{(2\pi)^n}\mu^{\frac{n}2}+\frac{\pi n^{\frac32}\omega_nV}{(2\pi)^n\ell}\mu^{\frac{n-1}2}.
\end{equation*}
Setting
\begin{equation*}
	\ell=\sqrt{\frac{\pi n V}{M S}}\mu^{-\frac14},
\end{equation*}
we find that, if $\mu$ is large enough so that
\begin{equation}
\label{eqBoundII}
	\sqrt{\frac{\pi n V}{M S}}\mu^{-\frac14}\le \frac{3\delta_0(\Omega)}{4\sqrt n},
\end{equation}
we have the following upper bound for the remainder
\begin{equation}
\label{eqRemainderDimN}
R_\Omega^D(\mu)\le \frac{2n\omega_n}{(2\pi)^n}\sqrt{\pi M }\rho \xi^{\frac{n}2-\frac14}.
\end{equation}

As before, if the eigenfunction $u$ is Courant-sharp,
\begin{equation*}
	\nu(u)=N_\Omega(\mu)+1,
\end{equation*}
and therefore, if Inequalities \eqref{eqBoundI} and \eqref{eqBoundII} are satisfied, from Inequalities \eqref{eqUpperNodalDimN} and \eqref{eqRemainderDimN}, we obtain
\begin{multline}
\label{eqIneqTestNDim}
	f_\rho(\xi):=\frac{\omega_n}{(2\pi)^n}\xi^{\frac{n}2}-\frac{2n\omega_n\sqrt{\pi M}}{(2\pi)^n}\rho \xi^{\frac{n}2-\frac14}-\frac1{\Lambda(n)^\frac{n}2}\left(\frac{1+\eps_0}{1-\eps_0}\xi+\frac{1+\frac{1}{\eps_0}}{1-\eps_0}{C^2}\xi^{\frac12}\right)^{\frac{n}2}-\\
	\frac{3\cdot2^{\frac{n}2-1}MK^\frac{n}2}{\Lambda(n)^\frac{n}2 m_-\eps_0^\frac{n}2}\rho^2\xi^{-\frac14}\left(\xi+C^2\xi^\frac12\right)^\frac{n}2<0.
\end{multline}
A straightforward computation shows that, since
\begin{equation*}
	0<\eps_0<\frac{1-\gamma(n)^\frac2n}{1+\gamma(n)^\frac2n},
\end{equation*}
we have
\begin{equation*}
	\lim_{\xi\to+\infty}{f_\rho}(\xi)=+\infty.
\end{equation*}
Taking into account the conditions \eqref{eqBoundI}, \eqref{eqBoundII} and \eqref{eqIneqTestNDim}, we can summarise our result in the following proposition.

\begin{prop} \label{propBoundGenNDim}
Let $\Omega$ be an open, bounded and connected set with a $C^2$  boundary. If $\mu$ is a Courant-sharp eigenvalue of the Neumann Laplacian  on $\Omega$ or of $-\Delta_{\Omega}^{\beta}$, we have
\begin{equation*}
\mu\le \max\left(\left(\frac{|\Omega|^\frac1{2n}}{\delta_0(\Omega)}\right)^4,\,\left(\sqrt{\frac\pi{M}}\frac{4n|\Omega|^\frac1{2n}}{3\rho(\Omega)\delta_0(\Omega)}\right)^4,\,
\frac{\xi^*(\rho(\Omega))}{|\Omega|^{\frac2{n}}}\right),
\end{equation*}
with
\begin{equation*}
	\xi^*(\rho):=\sup\{\xi>0\,;\,f_\rho(\xi)<0\},
\end{equation*}
the function $t\mapsto f_\rho(t)$ being defined in Formula \eqref{eqIneqTestNDim}.
\end{prop}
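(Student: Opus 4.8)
The plan is to assemble the estimates already established before the statement into the single necessary condition \eqref{eqIneqTestNDim}, and then to argue by cases according to whether $\mu$ is large enough for those estimates to be valid. The starting point is the Courant-sharp relation $\nu(u)=N_\Omega(\mu)+1$, which gives $\nu(u)>N_\Omega(\mu)$, combined with the Weyl-type lower bound $N_\Omega(\mu)\ge \frac{\omega_n|\Omega|}{(2\pi)^n}\mu^{n/2}-R_\Omega^D(\mu)$ coming from Dirichlet bracketing (valid for both $N_\Omega^N$ and $N_\Omega^\beta$). Chaining these yields
\begin{equation*}
\frac{\omega_n|\Omega|}{(2\pi)^n}\mu^{n/2}-R_\Omega^D(\mu)<\nu(u).
\end{equation*}

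First I would dispose of the two ``smallness'' cases. The nodal-count bound \eqref{eqUpperNodalDimN} and the remainder bound \eqref{eqRemainderDimN} are only valid under the hypotheses \eqref{eqBoundI} and \eqref{eqBoundII} respectively, each of which is a lower bound on $\mu$. A short computation, using $\xi=V^{2/n}\mu$ and $\rho=S^{1/2}/V^{1/2-1/(2n)}$ (so that $V/S=V^{1/n}/\rho^2$), shows that \eqref{eqBoundI} is equivalent to $\mu\ge\left(|\Omega|^{1/2n}/\delta_0(\Omega)\right)^4$ and that \eqref{eqBoundII} is equivalent to $\mu\ge\left(\sqrt{\pi/M}\,4n|\Omega|^{1/2n}/(3\rho\delta_0(\Omega))\right)^4$, i.e. to $\mu$ being at least the first and the second term appearing in the stated maximum. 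Hence if $\mu$ fails either hypothesis, it is automatically dominated by one of those two terms and the conclusion holds.

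It remains to treat the case where both \eqref{eqBoundI} and \eqref{eqBoundII} hold. Then both the remainder bound and the nodal-count bound may be substituted into the displayed inequality above. Passing to the variable $\xi=V^{2/n}\mu$, so that $\frac{\omega_n|\Omega|}{(2\pi)^n}\mu^{n/2}=\frac{\omega_n}{(2\pi)^n}\xi^{n/2}$, the inequality becomes precisely $f_\rho(\xi)<0$, which is \eqref{eqIneqTestNDim}. Since the choice of $\eps_0$ in \eqref{eqEpsNDim} guarantees that $f_\rho(\xi)\to+\infty$ as $\xi\to+\infty$ (as noted just before the statement), the set $\{\xi>0\,;\,f_\rho(\xi)<0\}$ is bounded above, so $\xi\le\xi^*(\rho)$, whence $\mu=\xi/|\Omega|^{2/n}\le\xi^*(\rho)/|\Omega|^{2/n}$, the third term in the maximum.

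The genuinely routine part is the algebra converting \eqref{eqBoundI} and \eqref{eqBoundII} into the first two bounds in the maximum, and checking that the substitution into the chained inequality reproduces \eqref{eqIneqTestNDim} term by term; the only point requiring care is to track the powers of $|\Omega|$ and $\rho$ under the change of variable $\xi=V^{2/n}\mu$. I expect no serious obstacle here, since all the analytic content---the change-of-variables formula of Lemma \ref{lemIntegrals} with the constants of Table \ref{tabConstndim}, the Faber--Krahn argument behind the nodal count, and the Dirichlet-bracketing remainder estimate---has already been carried out. The proposition is essentially the bookkeeping step that packages these ingredients, together with a clean case split on the size of $\mu$, into a single explicit bound.
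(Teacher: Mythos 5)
Your proposal is correct and follows essentially the same route as the paper: the paper's ``proof'' of Proposition~\ref{propBoundGenNDim} is precisely the discussion preceding it (the nodal-count bound \eqref{eqUpperNodalDimN} under \eqref{eqBoundI}, the remainder bound \eqref{eqRemainderDimN} under \eqref{eqBoundII}, their combination via $\nu(u)=N_\Omega(\mu)+1$ into $f_\rho(\xi)<0$, and the finiteness of $\xi^*(\rho)$ from $f_\rho(\xi)\to+\infty$), with the case split on \eqref{eqBoundI} and \eqref{eqBoundII} giving the first two terms of the maximum exactly as you describe. Your algebraic verification that the failure of \eqref{eqBoundI} or \eqref{eqBoundII} yields the first or second term, respectively, is also the correct (and in the paper, implicit) bookkeeping.
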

We note that the function $f_\rho$, and therefore also $\xi^*(\rho)$, only depend on the dimension $n$, the geometric properties of the domain $\Omega$ and the choice of $\eps_0$.

\subsection{Geometric upper bounds for Courant-sharp eigenvalues of convex bodies}\label{Sec9.1}
We now assume that the set $\Omega$ is convex. In that case, $t_+(\Omega)=\underline\delta_+(\Omega)$, as  discussed in Remark~\ref{remconvex}, and therefore $\delta_0(\Omega)=t_+(\Omega)$. To simplify the formulas, we write $t_+:=t_+(\Omega)$. Again $\eps_0$ is fixed as in \eqref{eqEpsNDim}. Additionally, the constants appearing on the left-hand side of Inequality \eqref{eqIneqTestNDim} can be chosen as in the second row of Table \ref{tabConstndim}, which implies a smaller value of $\xi^*(\rho)$. Furthermore, again from Remark \ref{remconvex}, Corollary \ref{corVolTube} holds, with $M=1$ and for any $r>0$. The estimate \eqref{eqRemainderDimN} on the remainder therefore holds without Condition \eqref{eqBoundII}. We conclude that, if $\mu$ is Courant-sharp, then
\begin{equation*}
	\mu\le \max\left(\left(\frac{V^\frac1{2n}}{t_+}\right)^4,\,\frac{\xi^*(\rho)}{V^{\frac2{n}}}\right).
\end{equation*}

We now obtain an explicit upper bound for $\xi^*(\rho)$.

From Inequality \eqref{eqIneqTestNDim} and the constants from the second row of Table~\ref{tabConstndim},
we have
\begin{multline}
\label{eqIneqTestNDimconvex}
	\frac{\omega_n}{(2\pi)^n}\xi^{\frac{n}2}-\frac{2n\omega_n\sqrt{\pi}}{(2\pi)^n}\rho \xi^{\frac{n}2-\frac14}-\frac1{\Lambda(n)^\frac{n}2}\left(\frac{1+\eps_0}{1-\eps_0}\xi+12\frac{1+\frac{1}{\eps_0}}{1-\eps_0} \xi^{\frac12}\right)^{\frac{n}2}-\\
	\frac{3\cdot2^{\frac{n}2-1}(4^{n-1})^{\frac{n}2 +1}}{\Lambda(n)^\frac{n}2 \eps_0^\frac{n}2}\rho^2\xi^{-\frac14}\left(\xi+12\xi^\frac12\right)^\frac{n}2<0.
\end{multline}

In what follows, we use the fact that, by \eqref{eqBoundI},
\begin{equation*}
  \xi = V^{2/n}\mu \geq \left(\frac{V^{1/n}}{t_{+}}\right)^4 \geq \omega_n^{4/n},
\end{equation*}
since $V = \vert \Omega \vert \geq \vert B_{t_{+}} \vert = \omega_n t_{+}^n$ as $B_{t_{+}} \subset \Omega$.

We also invoke the Mean Value theorem which gives that for $N \geq 1$, there exists $z \in (0,x)$
such that
\begin{equation*}
  (1+x)^N -1 = N(1+z)^{N-1}x.
\end{equation*}

We have that
\begin{align*}
&\frac1{\Lambda(n)^\frac{n}2}\left(\frac{1+\eps_0}{1-\eps_0}\xi+12\frac{1+\frac{1}{\eps_0}}{1-\eps_0} \xi^{\frac12}\right)^{\frac{n}2}\notag\\
&= \frac1{\Lambda(n)^\frac{n}2}\left(\frac{1+\eps_0}{1-\eps_0}\right)^{\frac{n}2}\xi^{\frac{n}2}
\left(1+12\frac{1+\frac{1}{\eps_0}}{1+\eps_0} \xi^{-\frac12}\right)^{\frac{n}2}\notag\\
&\leq \frac1{\Lambda(n)^\frac{n}2}\left(\frac{1+\eps_0}{1-\eps_0}\right)^{\frac{n}2}\xi^{\frac{n}2}
\left(1 + 6n \left(1+12\frac{1+\frac{1}{\eps_0}}{1+\eps_0} \xi^{-\frac12}\right)^{\frac{n}2 -1}\frac{1+\frac{1}{\eps_0}}{1+\eps_0} \xi^{-\frac12}\right)\notag\\
&\leq \frac1{\Lambda(n)^\frac{n}2}\left(\frac{1+\eps_0}{1-\eps_0}\right)^{\frac{n}2}\xi^{\frac{n}2}
\left(1 + 6n \left(1+12\frac{1+\frac{1}{\eps_0}}{1+\eps_0} \omega_n^{-\frac2{n}}\right)^{\frac{n}2 -1}\frac{1+\frac{1}{\eps_0}}{1+\eps_0} \xi^{-\frac12}\right)\notag\\
&= \frac1{\Lambda(n)^\frac{n}2}\left(\frac{1+\eps_0}{1-\eps_0}\right)^{\frac{n}2}\xi^{\frac{n}2} \notag\\
& \ \ \ + \frac{6n}{\Lambda(n)^\frac{n}2}\left(\frac{1+\eps_0}{1-\eps_0}\right)^{\frac{n}2}
\left(\frac{1+\frac{1}{\eps_0}}{1+\eps_0}\right)\left(1+12 \left(\frac{1+\frac{1}{\eps_0}}{1+\eps_0}\right) \omega_n^{-\frac2{n}}\right)^{\frac{n}2-1} \xi^{\frac{n-1}2},
\end{align*}
and
\begin{equation*}
  (\xi+12\xi^{\frac12})^{\frac{n}2} = \xi^{\frac{n}2}(1+12\xi^{-\frac12})^{\frac{n}2}
  \leq \xi^{\frac{n}2}(1+12 \omega_n^{-\frac2{n}})^{\frac{n}2}.
\end{equation*}

Substituting these into \eqref{eqIneqTestNDimconvex}, we obtain
\begin{multline}
\label{eqIneqTestNDimconvex2}
	\left(\frac{\omega_n}{(2\pi)^n} - \frac1{\Lambda(n)^\frac{n}2}\left(\frac{1+\eps_0}{1-\eps_0}\right)^{\frac{n}2}\right) \xi^{\frac{n}2}\\ < \left(\frac{2n\omega_n\sqrt{\pi}}{(2\pi)^n}\rho + \frac{3\cdot2^{\frac{n}2-1}(4^{n-1})^{\frac{n}2 +1}}{\Lambda(n)^\frac{n}2 \eps_0^\frac{n}2}(1+12 \omega_n^{-\frac2{n}})^{\frac{n}2}\rho^2 \right) \xi^{\frac{n}2-\frac14} \\ + \frac{6n}{\Lambda(n)^\frac{n}2}\left(\frac{1+\eps_0}{1-\eps_0}\right)^{\frac{n}2}
\left(\frac{1+\frac{1}{\eps_0}}{1+\eps_0}\right)\left(1+12 \left(\frac{1+\frac{1}{\eps_0}}{1+\eps_0}\right) \omega_n^{-\frac2{n}}\right)^{\frac{n}2-1} \xi^{\frac{n-1}2},
\end{multline}
We note that the coefficients of $\xi^{\frac{n}2}, \xi^{\frac{n-1}2}$ only depend on the dimension,
while the coefficient of $\xi^{\frac{n}2-\frac14}$ depends on the dimension and the isoperimetric ratio.

Inequality \eqref{eqIneqTestNDimconvex2} implies that
\begin{multline}
\label{eqIneqTestNDimconvex3}
\xi < D_n^{-1}\left(\frac{2n\omega_n\sqrt{\pi}}{(2\pi)^n}\rho + \frac{3\cdot2^{\frac{n}2-1}(4^{n-1})^{\frac{n}2 +1}}{\Lambda(n)^\frac{n}2 \eps_0^\frac{n}2}(1+12 \omega_n^{-\frac2{n}})^{\frac{n}2}\rho^2 \right) \xi^{\frac34} \\
+ \frac{6n}{\Lambda(n)^\frac{n}2}\left(\frac{1+\eps_0}{1-\eps_0}\right)^{\frac{n}2}
\left(\frac{1+\frac{1}{\eps_0}}{1+\eps_0}\right)\left(1+12 \left(\frac{1+\frac{1}{\eps_0}}{1+\eps_0}\right) \omega_n^{-\frac2{n}}\right)^{\frac{n}2-1} \xi^{\frac34}\xi^{-\frac14},
\end{multline}
where $D_n:=\frac{\omega_n}{(2\pi)^n} - \frac1{\Lambda(n)^\frac{n}2}\left(\frac{1+\eps_0}{1-\eps_0}\right)^{\frac{n}2}$.
Substituting $\xi^{-\frac14} \leq \omega_n^{-\frac1{n}}$ into Inequality \eqref{eqIneqTestNDimconvex3}, we obtain
\begin{align}
\label{eqIneqTestNDimconvex4}
&\xi < D_n^{-4}\Biggl(\frac{2n\omega_n\sqrt{\pi}}{(2\pi)^n}\rho + \frac{3\cdot2^{\frac{n}2-1}(4^{n-1})^{\frac{n}2 +1}}{\Lambda(n)^\frac{n}2 \eps_0^\frac{n}2}(1+12 \omega_n^{-\frac2{n}})^{\frac{n}2}\rho^2 \notag \\
& \ \ \ \ \ \ \ + \frac{6n \omega_{n}^{-\frac1{n}}}{\Lambda(n)^\frac{n}2}\left(\frac{1+\eps_0}{1-\eps_0}\right)^{\frac{n}2}
\left(\frac{1+\frac{1}{\eps_0}}{1+\eps_0}\right)\left(1+12 \left(\frac{1+\frac{1}{\eps_0}}{1+\eps_0}\right) \omega_n^{-\frac2{n}}\right)^{\frac{n}2-1}\Biggr)^{4},
\end{align}
which implies that
\begin{align}
\label{eqIneqTestNDimconvex5}
&\mu < D_n^{-4}V^{-2/n}\Biggl(\frac{2n\omega_n\sqrt{\pi}}{(2\pi)^n}\rho + \frac{3\cdot2^{\frac{n}2-1}(4^{n-1})^{\frac{n}2 +1}}{\Lambda(n)^\frac{n}2 \eps_0^\frac{n}2}(1+12 \omega_n^{-\frac2{n}})^{\frac{n}2}\rho^2 \notag \\
& \ \ \ \ \ \ \ + \frac{6n \omega_{n}^{-\frac1{n}}}{\Lambda(n)^\frac{n}2}\left(\frac{1+\eps_0}{1-\eps_0}\right)^{\frac{n}2}
\left(\frac{1+\frac{1}{\eps_0}}{1+\eps_0}\right)\left(1+12 \left(\frac{1+\frac{1}{\eps_0}}{1+\eps_0}\right) \omega_n^{-\frac2{n}}\right)^{\frac{n}2-1}\Biggr)^{4} \notag\\
& \ \ \ \ \ \ \ \ =: M_1(n,\rho,V).
\end{align}

If instead we consider the bound due to Payne and Weinberger, \cite{PW},
\begin{equation}
\label{eqPWDimN}
  \mu_2(\Omega) \geq \frac{\pi^2}{\diam(\Omega)^2},
\end{equation}
then
\begin{equation*}
  \xi^{-\frac14} \leq \frac{\diam(\Omega)^{\frac12}}{\sqrt{\pi}V^{\frac1{2n}}}.
\end{equation*}

The following bound due to Gritzmann, Wills and Wrase, \cite{GWW},
\begin{equation*}
  \diam(\Omega) < \frac{S^{n-1}}{\omega_{n-1}(nV)^{n-2}},
\end{equation*}
gives that
\begin{equation*}
  \xi^{-\frac14} \leq \frac{\rho^{n-1}}{\sqrt{\pi} \omega_{n-1}^{\frac12}n^{\frac{n}2-1}}.
\end{equation*}
By substituting the latter inequality into Inequality \eqref{eqIneqTestNDimconvex3}, we have
\begin{align}
\label{eqIneqTestNDimconvex6}
&\mu < D_n^{-4}V^{-\frac2n}\Biggl(\frac{2n\omega_n\sqrt{\pi}}{(2\pi)^n}\rho + \frac{3\cdot2^{\frac{n}2-1}(4^{n-1})^{\frac{n}2 +1}}{\Lambda(n)^\frac{n}2 \eps_0^\frac{n}2}(1+12 \omega_n^{-\frac2{n}})^{\frac{n}2}\rho^2 \notag \\
& + \frac{6 \rho^{n-1}}{\sqrt{\pi} \omega_{n-1}^{\frac12} n^{\frac{n}2-1}\Lambda(n)^\frac{n}2}\left(\frac{1+\eps_0}{1-\eps_0}\right)^{\frac{n}2}
\left(\frac{1+\frac{1}{\eps_0}}{1+\eps_0}\right)\left(1+12 \left(\frac{1+\frac{1}{\eps_0}}{1+\eps_0}\right) \omega_n^{-\frac2{n}}\right)^{\frac{n}2-1}\Biggr)^{4}\notag\\
&=:M_2(n,\rho,V)
\end{align}

We can summarise both versions of the upper bound in the following proposition.

\begin{prop} \label{propConvNDim} Let $\Omega$ be an open, bounded and convex set with a $C^2$ boundary. If $\mu$ is a Courant-sharp eigenvalue of the Neumann Laplacian on $\Omega$ or of $-\Delta_{\Omega}^{\beta}$, we have
\begin{equation}
\label{eqUBmuND1}
	\mu\le \max\left(\frac{|\Omega|^\frac2n}{t_+(\Omega)^4},\,M_1(n,\rho(\Omega), |\Omega|)\right)
\end{equation}
and
\begin{equation*}
	\mu\le \max\left(\frac{|\Omega|^\frac2n}{t_+(\Omega)^4},\,M_2(n,\rho(\Omega), |\Omega|)\right),
\end{equation*}
with $M_1$ and $M_2$ defined in Formulas \eqref{eqIneqTestNDimconvex5} and \eqref{eqIneqTestNDimconvex6} respectively.
\end{prop}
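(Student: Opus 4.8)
The plan is to collect the estimates derived above into the two stated bounds, so the proof is essentially a synthesis. I would begin from Proposition~\ref{propBoundGenNDim} and simplify it using convexity. By Remark~\ref{remconvex}, convexity of $\Omega$ gives $\delta_0(\Omega)=t_+(\Omega)$ and allows us to take $M=1$ in Corollary~\ref{corVolTube} for all $r>0$; consequently the remainder estimate \eqref{eqRemainderDimN} holds without Condition~\eqref{eqBoundII}, so the second term in the maximum of Proposition~\ref{propBoundGenNDim}, which arose precisely from \eqref{eqBoundII}, can be dropped. This reduces the general bound to
\begin{equation*}
\mu\le \max\left(\frac{|\Omega|^{2/n}}{t_+(\Omega)^4},\,\frac{\xi^*(\rho)}{|\Omega|^{2/n}}\right),
\end{equation*}
recalling that $\xi^*(\rho)=\sup\{\xi>0:f_\rho(\xi)<0\}$ and that $(|\Omega|^{1/2n}/t_+)^4=|\Omega|^{2/n}/t_+^4$. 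Both stated inequalities will then follow once I produce explicit upper bounds for $\xi^*(\rho)$ that yield $M_1$ and $M_2$ respectively after dividing by $|\Omega|^{2/n}$.

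Next I would bound $\xi^*(\rho)$. Fix any $\xi>0$ with $f_\rho(\xi)<0$, where $f_\rho$ is given by \eqref{eqIneqTestNDim} with the convex constants from the second row of Table~\ref{tabConstndim}, i.e.\ Inequality~\eqref{eqIneqTestNDimconvex}. The coefficient of the leading term $\xi^{n/2}$ is $D_n=\frac{\omega_n}{(2\pi)^n}-\frac1{\Lambda(n)^{n/2}}\left(\frac{1+\eps_0}{1-\eps_0}\right)^{n/2}$, which is strictly positive thanks to the choice of $\eps_0$ in \eqref{eqEpsNDim} together with $\gamma(n)<1$ (equivalently $\eps_0<\frac{1-\gamma(n)^{2/n}}{1+\gamma(n)^{2/n}}$). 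The task is then to show that the lower-order terms force $\xi$ to be bounded. The key technical step is to control the two nonlinear blocks $\left(\tfrac{1+\eps_0}{1-\eps_0}\xi+12\tfrac{1+1/\eps_0}{1-\eps_0}\xi^{1/2}\right)^{n/2}$ and $\left(\xi+12\xi^{1/2}\right)^{n/2}$: I factor out $\xi^{n/2}$ from each and apply the Mean Value theorem to $(1+x)^{n/2}$, using the a priori lower bound $\xi\ge\omega_n^{4/n}$ (valid by \eqref{eqBoundI} and $|\Omega|\ge\omega_n t_+^n$) to replace the residual $\xi^{-1/2}$ factors by explicit constants. This linearises \eqref{eqIneqTestNDimconvex} to the form \eqref{eqIneqTestNDimconvex2}; after dividing by $D_n\xi^{3/4}$ and once more invoking $\xi\ge\omega_n^{4/n}$ to absorb the remaining $\xi^{-1/4}$ factor, I obtain the bound $\xi<D_n^{-4}(\cdots)^4$ of \eqref{eqIneqTestNDimconvex5}, which is exactly $\xi^*(\rho)/|\Omega|^{2/n}\le M_1(n,\rho,|\Omega|)$.

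For the second inequality I would treat the final $\xi^{-1/4}$ factor differently. Instead of the crude bound $\xi^{-1/4}\le\omega_n^{-1/n}$, I use $\mu\ge\mu_2(\Omega)$ together with the Payne--Weinberger inequality \eqref{eqPWDimN} and the Gritzmann--Wills--Wrase diameter bound to write $\xi^{-1/4}\le \rho^{n-1}/(\sqrt{\pi}\,\omega_{n-1}^{1/2}n^{n/2-1})$; substituting this into the linearised inequality yields \eqref{eqIneqTestNDimconvex6}, giving $\xi^*(\rho)/|\Omega|^{2/n}\le M_2(n,\rho,|\Omega|)$. Combining each bound with the reduced maximum above produces the two displayed inequalities. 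I expect the main obstacle to be the bookkeeping in the Mean Value theorem step, where one must bound the intermediate point using only the a priori estimate $\xi\ge\omega_n^{4/n}$ so that all constants remain explicit, dimension-dependent but $\Omega$-independent; the positivity of $D_n$, which underlies the whole argument, is precisely the reason for the specific choice of $\eps_0$ in \eqref{eqEpsNDim}.
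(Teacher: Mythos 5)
Your proposal is correct and follows essentially the same route as the paper: reduce to $\mu\le\max\bigl(|\Omega|^{2/n}t_+(\Omega)^{-4},\,\xi^*(\rho)/|\Omega|^{2/n}\bigr)$ using convexity (so $\delta_0=t_+$, $M=1$, and Condition \eqref{eqBoundII} is unnecessary), then bound $\xi^*(\rho)$ by linearising \eqref{eqIneqTestNDimconvex} via the Mean Value theorem and the a priori bound $\xi\ge\omega_n^{4/n}$, handling the residual $\xi^{-1/4}$ either by $\xi^{-1/4}\le\omega_n^{-1/n}$ (giving $M_1$) or by Payne--Weinberger plus Gritzmann--Wills--Wrase (giving $M_2$). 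Apart from a minor imprecision in the exponent of the power of $\xi$ you divide by, the argument matches the paper's proof step for step.
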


Similarly to the end of Section \ref{Sec8}, we can find simpler, although less explicit, upper bounds.
\begin{prop} \label{propConvNDimSimple} Let $\Omega$ be an open, bounded and convex set with a $C^2$ boundary. Let  $\mu=\mu_k(\Omega)$ or $\mu=\mu_k(\Omega,\beta)$, with $\beta$ a non-negative Lipschitz continuous function on $\partial\Omega$. There exist positive constants $C_n$ and $C_n'$ that do not depend on $\Omega$ such that,
if $\mu$ is Courant-sharp,
\begin{equation*}
	\mu\le C_n\left(\frac{|\Omega|^\frac2n}{t_+(\Omega)^4}+\frac{\rho(\Omega)^8}{|\Omega|^\frac2n}\right)
\end{equation*}
and
\begin{equation*}
	k\le C_n'\left(\left(\frac{|\Omega|^\frac1{n}}{t_+(\Omega)}\right)^{2n}+\rho(\Omega)^{4n}\right).
\end{equation*}
\end{prop}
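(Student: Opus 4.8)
The plan is to follow the strategy used for Proposition~\ref{prop:Noevalconvex} in the planar case, replacing the two-dimensional estimates by their $n$-dimensional counterparts established earlier in this section. For the first inequality, I would start from the bound \eqref{eqUBmuND1} of Proposition~\ref{propConvNDim}, namely $\mu \le \max(|\Omega|^{2/n}t_+(\Omega)^{-4},\,M_1(n,\rho(\Omega),|\Omega|))$, and simplify the expression \eqref{eqIneqTestNDimconvex5} defining $M_1$. The crucial point is that, by the isoperimetric inequality, $\rho(\Omega) \ge n^{1/2}\omega_n^{1/(2n)}$, so $\rho(\Omega)$ is bounded below by a constant depending only on $n$. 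This lets the constant term and the term linear in $\rho$ inside the fourth power in \eqref{eqIneqTestNDimconvex5} be absorbed into the quadratic term $\rho^2$, up to a dimensional factor. Hence the bracket is bounded by $C_n\rho(\Omega)^2$, its fourth power by $C_n\rho(\Omega)^8$, and therefore $M_1(n,\rho(\Omega),|\Omega|) \le C_n\rho(\Omega)^8|\Omega|^{-2/n}$. Since $|\Omega|^{2/n}t_+(\Omega)^{-4}$ is already one of the two summands, replacing the maximum in \eqref{eqUBmuND1} by the sum yields the first inequality.

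For the second inequality, I would use that if $\mu=\mu_k$ is Courant-sharp then $N_\Omega(\mu)=k-1$, so $k=N_\Omega(\mu)+1$. I would then invoke the upper bound for the Neumann counting function of a convex set from Corollary~\ref{corUBk}, extended to the Robin case via the monotonicity $\mu_k(\Omega)\le\mu_k(\Omega,\beta)$ recorded in Remark~\ref{remAppRobin}. This bound is of Weyl type, with leading term proportional to $|\Omega|\,\mu^{n/2}$ together with a correction controlled by the surface measure $|\partial\Omega|=\rho(\Omega)^2|\Omega|^{(n-1)/n}$. Substituting the first inequality into the leading term and using $(a+b)^{n/2}\le 2^{n/2-1}(a^{n/2}+b^{n/2})$ (valid since $n/2\ge1$), one computes
\[
|\Omega|\,\mu^{n/2} \le C_n\left(|\Omega|^2 t_+(\Omega)^{-2n} + \rho(\Omega)^{4n}\right) = C_n\left(\left(\frac{|\Omega|^{1/n}}{t_+(\Omega)}\right)^{2n} + \rho(\Omega)^{4n}\right),
\]
which is already of the desired form. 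The surface correction is treated the same way: after substituting the bound on $\mu$ one obtains a mixed term $\rho(\Omega)^2(|\Omega|^{1/n}t_+(\Omega)^{-1})^{2(n-1)}$ and a term $\rho(\Omega)^{4n-2}$, which collapse into $(|\Omega|^{1/n}t_+(\Omega)^{-1})^{2n}$ and $\rho(\Omega)^{4n}$ respectively by Young's inequality, using once more the isoperimetric lower bound on $\rho(\Omega)$ together with $|\Omega|^{1/n}t_+(\Omega)^{-1}\ge\omega_n^{1/n}$ (from $B_{t_+}\subset\Omega$) to absorb residual dimensional constants. The additive $+1$ from $k=N_\Omega(\mu)+1$ is harmless since the right-hand side is bounded below by a dimensional constant.

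The routine part is the first inequality, which reduces to elementary manipulation once the isoperimetric lower bound on $\rho(\Omega)$ is in hand. The main obstacle is the bookkeeping in the second inequality: one must verify that every term produced by substituting the bound on $\mu$ into the full counting-function estimate—the leading Weyl term and each surface correction—collapses, after the $(a+b)^{n/2}$ splitting and Young's inequality, into a combination of $(|\Omega|^{1/n}t_+(\Omega)^{-1})^{2n}$ and $\rho(\Omega)^{4n}$ with constants depending only on $n$. Keeping track of the exponents so that no cross term of a different scaling survives is where the care is required; the scaling invariance of the target inequality provides a useful consistency check throughout.
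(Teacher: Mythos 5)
Your proposal is correct and follows essentially the same route as the paper's own (brief) proof: the first inequality comes from \eqref{eqUBmuND1} together with the simplification of \eqref{eqIneqTestNDimconvex4}--\eqref{eqIneqTestNDimconvex5} via the isoperimetric lower bound $\rho(\Omega)\ge\sqrt{n}\,\omega_n^{1/(2n)}$, and the second from the first combined with Corollary~\ref{corUBk} and Remark~\ref{remAppRobin}, using Young's inequality repeatedly (plus the lower bounds $\rho(\Omega)\ge\sqrt{n}\,\omega_n^{1/(2n)}$ and $|\Omega|^{1/n}t_+(\Omega)^{-1}\ge\omega_n^{1/n}$) to absorb all cross terms. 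Your bookkeeping of the Weyl term and the surface corrections matches what the paper leaves implicit, so there is nothing to fix.
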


The first inequality in Proposition \ref{propConvNDimSimple} follows from Inequalities \eqref{eqIneqTestNDimconvex4} and \eqref{eqUBmuND1}, taking into account $\rho(\Omega)\ge \sqrt n \omega_n^\frac1{2n}$.  The second follows from the first and  Corollary \ref{corUBk}, together with Remark \ref{remAppRobin}, after applying Young's inequality repeatedly.  Both $C_n$ and $C_n'$ could be computed explicitly.

As described in the introduction for the case of dimension 2, the previous inequality implies that a sufficiently regular convex set with a large number of Courant-sharp eigenvalues has a large isoperimetric ratio or a point in its boundary where the curvature is large (or both). Additionally, if there exists a Courant-sharp eigenvalue which is large with respect to $\vert\Omega\vert^\frac2nt_+(\Omega)^{-4}$, the set has a large isoperimetric ratio.

\textbf{Acknowledgments:}
KG was supported by the Swiss National Science Foundation grant no.\ 200021\_163228 entitled \emph{Geometric Spectral Theory} during the academic year 2016 - 2017.
KG also acknowledges support from the Max Planck Institute for Mathematics, Bonn, from 1 October 2017 to 31 July 2018.
CL acknowledges support from the ERC (project  COMPAT,  ERC-2013-ADG no. 339958), the  Portuguese FCT (Project OPTFORMA, IF/00177/2013) and the Swedish Research Council (Grant D0497301).
Both authors wish to thank the referee for her/his insightful comments and suggestions.

\appendix
\section{An upper bound for the Neumann counting function of a convex domain}\label{sec:A}

\begin{prop} \label{propUBN} Let $\Omega$ be an open, bounded and convex set in $\RR^n$ with a $C^2$ boundary. Then, for all $\mu>0$, we have
\begin{equation}
\label{eqIneqUBN}
N^N_\Omega(\mu)\le \frac{n^\frac{n}2}{\pi^n}|\Omega|\mu^\frac{n}2+\frac{n^\frac{n}2}{\pi^n}|\partial\Omega|\sum_{i=0}^{n-1}\frac{\left(\begin{array}{c}n-1\\i\end{array}\right)\pi^{i+1}}{(i+1)t_+(\Omega)^i}\mu^{\frac{n-i-1}2},
\end{equation}
where $t_+(\Omega)$ is the smallest radius of curvature, as defined by Equation \eqref{eqCriticalt}.
\end{prop}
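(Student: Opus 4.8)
The plan is to bound $N^N_\Omega(\mu)$ by Neumann bracketing against a grid of cubes whose side length is tuned to $\mu$. I would tile $\RR^n$ by a lattice of closed cubes $\{Q\}$ of side length $h>0$ (to be fixed) and consider the partition of $\Omega$ into the convex pieces $\Omega\cap Q$. Since restriction embeds $H^1(\Omega)$ into $\bigoplus_Q H^1(\Omega\cap Q)$ with the same Dirichlet form, the min-max principle gives the standard Neumann bracketing inequality
\begin{equation*}
	N^N_\Omega(\mu)\le\sum_{Q\,:\,|\Omega\cap Q|>0}N^N_{\Omega\cap Q}(\mu).
\end{equation*}
The reason for insisting that $\Omega$ be convex is that each piece $\Omega\cap Q$ is then again convex, hence connected, and has diameter at most $\diam(Q)=h\sqrt n$.

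The crucial step is to choose $h$ so small that each piece carries only its trivial eigenvalue below $\mu$. By the Payne--Weinberger inequality \eqref{eqPWDimN} applied to the convex set $\Omega\cap Q$,
\begin{equation*}
	\mu_2(\Omega\cap Q)\ge\frac{\pi^2}{\diam(\Omega\cap Q)^2}\ge\frac{\pi^2}{h^2n}.
\end{equation*}
Setting $h:=\pi/\sqrt{n\mu}$ makes the right-hand side equal to $\mu$, so that $\mu_2(\Omega\cap Q)\ge\mu$ and therefore $N^N_{\Omega\cap Q}(\mu)\le1$ for every nonempty piece (only the eigenvalue $0<\mu$ is counted). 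Consequently the spectral problem collapses to a purely geometric lattice count, $N^N_\Omega(\mu)\le\#\{Q\,:\,Q\cap\Omega\neq\emptyset\}$.

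It then remains to estimate the number of cubes meeting $\Omega$. If $Q\cap\Omega\neq\emptyset$, then every point of $Q$ lies within distance $\diam(Q)=h\sqrt n$ of $\Omega$, so $Q$ is contained in the outer parallel body $\Omega^{(r)}:=\{x\,:\,\mathrm{dist}(x,\Omega)\le r\}$ with $r=h\sqrt n=\pi/\sqrt\mu$. As these cubes have disjoint interiors and volume $h^n$, their number is at most $h^{-n}|\Omega^{(r)}|$, where $h^{-n}=n^{n/2}\mu^{n/2}/\pi^n$. Finally I would estimate $|\Omega^{(r)}|$ by the outer Steiner (tube) formula for a $C^2$ convex body,
\begin{equation*}
	|\Omega^{(r)}|=|\Omega|+\int_{\partial\Omega}\int_0^r\prod_{i=1}^{n-1}\bigl(1+s\kappa_i(x')\bigr)\,ds\,dx',
\end{equation*}
in which the map $(x',s)\mapsto x'+s\mathbf n(x')$ is a diffeomorphism onto $\Omega^{(r)}\setminus\overline\Omega$ precisely because convexity forces $\kappa_i\ge0$. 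Using $0\le\kappa_i(x')\le t_+(\Omega)^{-1}$ to bound $\prod_{i=1}^{n-1}(1+s\kappa_i)\le(1+s/t_+(\Omega))^{n-1}$ and integrating in $s$ yields
\begin{equation*}
	|\Omega^{(r)}|\le|\Omega|+|\partial\Omega|\sum_{i=0}^{n-1}\binom{n-1}{i}\frac{r^{i+1}}{(i+1)\,t_+(\Omega)^i}.
\end{equation*}
Multiplying by $h^{-n}$ and substituting $r=\pi\mu^{-1/2}$, so that $r^{i+1}\mu^{n/2}=\pi^{i+1}\mu^{(n-i-1)/2}$, reproduces Inequality \eqref{eqIneqUBN} exactly. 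The two points needing care are the bracketing inequality for these non-smooth convex pieces (routine, as only the form-domain inclusion is required) and the validity of the outer tube formula together with its diffeomorphism/Jacobian claim for a general $C^2$ convex body; the latter is the genuine geometric input, which I would cite (as in Chapter 6 of \cite{BG88}) rather than reprove.
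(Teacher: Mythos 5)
Your proposal is correct and follows essentially the same route as the paper: Neumann bracketing over a cubic lattice of side $\pi/\sqrt{n\mu}$, the Payne--Weinberger inequality on each convex piece $\Omega\cap Q$, and a count of the cubes meeting $\Omega$ via inclusion in the outer parallel body together with the Steiner-type bound using $0\le\kappa_i\le t_+(\Omega)^{-1}$. The only cosmetic difference is that you phrase the bracketing as the counting-function inequality $N^N_\Omega(\mu)\le\sum_Q N^N_{\Omega\cap Q}(\mu)$ with each summand at most $1$, while the paper equivalently works with the eigenvalues of the disjoint union $\widetilde\Omega_a$ and the fact that its $(K(a)+1)$-st eigenvalue is at least $\pi^2/(na^2)$.
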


\begin{proof} We combine the Payne-Weinberger lower bound (see Inequality \eqref{eqPWDimN}) with Neumann bracketing. Reference \cite{vdBkG16cs} uses a similar approach with Dirichlet bracketing.

For $\alpha\in \ZZ^n$, we define the open hypercube $C_\alpha:=\alpha+(0,1)^n$. Let us fix $a>0$. We define
\begin{equation*}
	\mathcal I_a:=\left\{\alpha\in\ZZ^n\,;\,aC_\alpha\cap \Omega\neq\emptyset\right\}
\end{equation*}
and denote by $K(a)$ the cardinality of $\mathcal I_a$. For all $\alpha\in \mathcal I_a$, we define $\Omega_{\alpha,a}:=\Omega\cap aC_\alpha$. By construction, $\Omega_{\alpha,a}$ is a convex, open set with diameter bounded from above by
$\sqrt{n}a$. We define
\begin{equation*}
	\widetilde{\Omega}_a:=\bigcup_{\alpha\in\mathcal I_a}\Omega_{\alpha,a},
\end{equation*}
which is open and bounded, but not connected.

We consider the spectrum of the Neumann Laplacian on $\widetilde{\Omega}_a$, which we denote by $(\mu_k(a))_{k\ge1}$. Since $\widetilde{\Omega}_a$ has $K(a)$ connected components, $0$ is an eigenvalue of multiplicity $K(a)$, that is
\begin{equation*}
	0=\mu_1(a)=\mu_2(a)=\dots=\mu_{K(a)}(a).
\end{equation*}
The eigenvalue $\mu_{K(a)+1}(a)$ is the smallest among the non-trivial eigenvalues of the sets $\Omega_{\alpha,a}$ with $\alpha\in \mathcal I_a$, and therefore, according to the Payne-Weinberger inequality,
\begin{equation*}
	\frac{\pi^2}{na^2}\le \mu_{K(a)+1}(a).
\end{equation*}
It follows from the variational characterisation of the eigenvalues that $\mu_k(a)\le \mu_k(\Omega)$ for any integer $k$
(see, for example, Proposition 4(c) of \cite{RS}), so that
\begin{equation*}
	\frac{\pi^2}{na^2}\le \mu_{K(a)+1}(\Omega).
\end{equation*}
Equivalently, the counting function $N^N_\Omega (\cdot)$ satisfies
\begin{equation}
\label{eqIneqNK}
	N^N_\Omega\left(\frac{\pi^2}{na^2}\right)\le K(a).
\end{equation}
To finish the proof, let us give an upper bound for $K(a)$. We define $D_a$ as the interior of
\begin{equation*}
	\overline{\bigcup_{\alpha\in\mathcal I_a}C_{\alpha,a}},
\end{equation*}
and note that $|D_a|=K(a)a^n$. Furthermore, we have
\begin{equation*}
	D_a\subset \Omega+\sqrt{n}a B,
\end{equation*}
where $B$ is the ball of radius $1$ in $\RR^n$ and the right-hand side is understood as a Minkowski sum. We obtain
\begin{equation}
\label{eqIneqKVol}
	K(a)\le a^{-n}\left|\Omega+\sqrt{n}aB\right|.
\end{equation}
Using normal coordinates in the exterior of $\Omega$ (similarly to Subsection~\ref{SubsecTubesNdim} but for the outer parallel sets), we find, for any $\delta>0$,
\begin{equation*}
	\left|\Omega+\delta B\right|=|\Omega|+\int_0^{\delta}\left(\int_{\partial \Omega}\prod_{i=1}^{n-1}(1+t\kappa_i(x'))\,dx'\right)\,dt.
\end{equation*}
Expanding and using $\kappa_i(x')\le t_+(\Omega)^{-1}$ for all $x'\in \partial \Omega$ and $i\in\{1,\dots,n-1\}$, we find
\begin{equation*}
	\left|\Omega+\delta B\right|\le |\Omega|+|\partial\Omega|\sum_{i=0}^{n-1}\frac{\left(\begin{array}{c}n-1\\i\end{array}\right)\delta^{i+1}}{(i+1)t_+^i(\Omega)}.
\end{equation*}
We now choose $\delta:=\sqrt{n}a:=\pi/\sqrt{\mu}$. We obtain the desired result by substituting this value into the previous inequality and by using Inequalities \eqref{eqIneqKVol} and \eqref{eqIneqNK} successively.
\end{proof}

Let us denote by $F_n(|\Omega|,|\partial\Omega|,t_+(\Omega),\mu)$ the right-hand side of Inequality \eqref{eqIneqUBN}. It is a continuous and increasing function of $\mu$.

\begin{cor} \label{corUBk} Let $\Omega$ be an open, bounded and convex set in $\RR^n$ with a $C^2$ boundary. For any integer $k\ge1$,
\begin{equation}
\label{eqIneqUBk}
k\le F_n(|\Omega|,|\partial\Omega|,t_+(\Omega),\mu_k(\Omega)).
\end{equation}
\end{cor}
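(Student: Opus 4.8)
The plan is to read off the corollary directly from Proposition~\ref{propUBN}, which already furnishes the pointwise estimate $N^N_\Omega(\mu)\le F_n(|\Omega|,|\partial\Omega|,t_+(\Omega),\mu)$ for every $\mu>0$. All the analytic content lies in that proposition; the corollary is merely its translation into a bound on the $k$-th Neumann eigenvalue. The one subtlety is that $N^N_\Omega$ is defined with a \emph{strict} inequality, $N^N_\Omega(\mu)=\sharp\{j\in\NN:\mu_j(\Omega)<\mu\}$, so one cannot simply evaluate at $\mu=\mu_k(\Omega)$; instead I would approach $\mu_k(\Omega)$ from above and pass to the limit using the continuity of $F_n$ in its last argument.

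Concretely, I would first fix $\eps>0$. Since the eigenvalues form a non-decreasing sequence, the first $k$ of them satisfy $\mu_1(\Omega)\le\dots\le\mu_k(\Omega)<\mu_k(\Omega)+\eps$, so there are at least $k$ indices $j$ with $\mu_j(\Omega)<\mu_k(\Omega)+\eps$. By the definition of the counting function this yields
\[
k\le N^N_\Omega(\mu_k(\Omega)+\eps).
\]
Applying Proposition~\ref{propUBN} with $\mu=\mu_k(\Omega)+\eps$ then gives
\[
k\le N^N_\Omega(\mu_k(\Omega)+\eps)\le F_n(|\Omega|,|\partial\Omega|,t_+(\Omega),\mu_k(\Omega)+\eps).
\]

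Finally, recalling that $F_n$ is a continuous function of $\mu$, I would let $\eps\to0^+$ to conclude that
\[
k\le F_n(|\Omega|,|\partial\Omega|,t_+(\Omega),\mu_k(\Omega)),
\]
which is exactly Inequality~\eqref{eqIneqUBk}. I do not expect any genuine obstacle here: the only point requiring care is the strict-versus-non-strict inequality in the definition of $N^N_\Omega$, and this is resolved cleanly by the $\eps$-perturbation together with the continuity of $F_n$ noted immediately after Proposition~\ref{propUBN}.
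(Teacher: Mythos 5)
Your proof is correct and is essentially the paper's own argument: the paper likewise resolves the strict inequality in the definition of $N^N_\Omega$ by an $\eps$-perturbation (it evaluates at $(1+\eps)\mu_k(\Omega)$ where you use $\mu_k(\Omega)+\eps$), applies Proposition~\ref{propUBN}, and lets $\eps\to0$ using the continuity of $F_n$ in $\mu$. If anything, your additive shift is marginally more careful, since for $k=1$ (where $\mu_1(\Omega)=0$) the multiplicative perturbation $(1+\eps)\mu_1(\Omega)=0$ gives $N^N_\Omega(0)=0$, whereas $\mu_1(\Omega)+\eps>0$ yields the needed bound $k\le N^N_\Omega(\mu_k(\Omega)+\eps)$ directly.
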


\begin{proof} Let us fix $\eps>0$. We have $N^N_\Omega((1+\eps)\mu_k(\Omega))\ge k$. We obtain Inequality \eqref{eqIneqUBk} by applying Inequality \eqref{eqIneqUBN} and letting $\eps$ go to $0$.
\end{proof}

\begin{rem} \label{remAppRobin} We recall that $\mu_k(\Omega)\le \mu_k(\Omega,\beta)$ for every non-negative Lipschitz continuous function $\beta$ on $\partial \Omega$, and every $k\ge 1$. Since $F_n(|\Omega|,|\partial\Omega|,t_+(\Omega),\mu)$ is increasing in $\mu$, Inequalities \eqref{eqIneqUBN} and \eqref{eqIneqUBk}  hold when substituting $N^\beta_\Omega(\mu)$ for $N^N_\Omega(\mu)$ and $\mu_k(\Omega,\beta)$ for $\mu_k(\Omega)$, respectively.
\end{rem}

\bibliographystyle{plain}

\section{Corrigendum}
\label{app:corr}

\begin{abstract}
In the above work, the number of nodal domains for the eigenfunctions of the Neumann or Robin Laplacian is bounded from above using the classical (Euclidean) Faber-Krahn inequality. However, in an arbitrary Riemannian manifold, this inequality might not hold. We supply the missing arguments in two dimensions and outline a modification of the method, which preserves most of the results, in $n$ dimensions.
\end{abstract}

\vspace{10pt}

The mapping $F$ introduced in Equation \eqref{eqF} on page 6 should instead be defined from the cylinder $\mathcal C_L:=(\mathbb R/(L\mathbb Z))\times \mathbb R$ to $\mathbb R^2$. The set $V^R$, defined at the bottom of page 10 through a preimage by $F$, is therefore an open set in $\mathcal C_L$ and not in $\mathbb R^2$. It is not a priori clear that the Faber-Krahn inequality with the same constant as in the Euclidean case (hereafter called \emph{classical Faber-Krahn inequality}) holds for $V^R$. The same gap occurs in Section \ref{ssecgeom} when considering a possibly multiply connected domain in $\mathbb R^2$, for which $V^R$ is an open set in the disjoint union $\bigcup_{i=0}^b\mathcal C_{L_i}$. Similarly, when $\Omega\subset\mathbb R^n$, we implicitly apply (on page 23) the classical Faber-Krahn inequality in the cylindrical manifold $N(\Gamma)=\Gamma\times \mathbb R$, where it might not hold.

When $\Omega$ is a (possibly multiply connected) $C^2$ domain in $\mathbb R^2$, we can fix the issue by combining a few known geometric results. Examination of the proof of the classical Faber-Krahn inequality by symmetrization (see for instance \cite[I.9]{BM82-1}) shows that it holds provided all open sets $W\subset V^R$ satisfy the classical isoperimetric inequality $|\partial W|^2\ge 4\pi|W|$. From \cite[\S 6]{ HHM99}, the classical isoperimetric inequality holds for any open set $W_i\subset \mathcal C_{L_i}$ such that $|W_i|\le L_i^2/\pi$. Note that the parameter $a$ in Reference \cite{HHM99} is the radius of the cylinder, equal to $L_i/2\pi$ in our notation. We then remark that $V^R$ can be written as a disjoint union $\bigcup_{i=0}^b V_i$, with each $V_i$ contained in 
\[(\mathbb R/(L_i\mathbb Z))\times (-\delta_0(\Omega),\delta_0(\Omega))\subset\mathcal C_{L_i}\]
(some $V_i$'s may be empty). In addition, $\delta_0(\Omega)$ is less than or equal to the smallest radius of curvature of $\partial \Omega$, denoted by $t_+(\Omega)$. We can write $\Omega=\Omega_0\setminus\bigcup_{i=1}^b \overline{D_i}$, where $\Omega_0$ and all the $D_i$'s are simply connected $C^2$ domains and apply to each the inequality $\pi t_+(D)^2\le |D|$, valid for any simply connected domain $D \subset \mathbb R^2$ (see \cite{P2015} and references therein). We obtain 
\[\delta_0(\Omega)\le t_+(\Omega)\le \frac{1}{\pi^{1/2}}\min\left\{|\Omega_0|^{1/2},\min_{1\le i\le b}|D_i|^{1/2}\right\}\le \frac{1}{2\pi}\min_{0\le i\le b}L_i\]
(where the last inequality follows from the isoperimetric inequality in $\mathbb R^2$). Thus, $|V_i|\le 2\,\delta_0(\Omega)\,L_i\le  L_i^2/\pi$ for all $i$, and any $W_i\subset V_i$ satisfies the isoperimetric inequality. This is therefore also the case for any $W=\bigcup_{i=0}^b W_i\subset V^R$, which concludes the proof. All the results of Sections 2 to 8 hold without any change.

 Most of the material in Section \ref{SecNdim} can be recovered by replacing the quantities $\delta_+(x')$, $\underline{\delta}_+(\Omega)$ and $\delta_0(\Omega)$, defined on page 21, with respectively
\begin{align*}
	\delta(x')&:=\sup\{\delta>0\,:\,\mbox{dist}(F(x',t),\partial \Omega)=|t|\mbox{ for all }t\in[-\delta,\delta]\},\\
	\underline{\delta}(\Omega)&:=\inf_{x'\in\partial \Omega}\delta(x'),\\
	\delta_1(\Omega)&:=\min\{t_+(\Omega),\underline{\delta}(\Omega)\}.
\end{align*} 
Having defined the set $V^R\subset \Gamma \times \mathbb R$ as in Section \ref{ssecNoofnodaldomains}, we modify our method by taking the additional steps of considering the image $U^R:=F(V^R)\subset \mathbb R^n$ and applying the Faber-Krahn inequality to $U^R$. We get an upper bound on the number of boundary nodal domains $\nu_1(\varepsilon_0,u)$, and thus find results similar to Proposition \ref{propBoundGenNDim}, except that $\delta_0(\Omega)$ is replaced with $\delta_1(\Omega)$ and the constants depending only on $n$ are larger ones. Note that the geometric quantity $\delta_1(\Omega)$  depends on the cut-distance to the boundary (also known as the injectivity radius of the normal exponential map) with respect to both the interior and the exterior of the domain $\Omega$, whereas $\delta_0(\Omega)$ depends only on the interior. If, in addition, $\Omega$ is assumed to be convex, then $\delta_1(\Omega)=\delta_0(\Omega)$, since $\mbox{dist}(F(x',t),\partial \Omega)=-t$ for all $x'\in\partial \Omega$ and $t\le 0$ (the cut-distance with respect to the exterior is infinite). Therefore, in that case, only the constants depending on $n$ change. In particular, Proposition \ref{propConvNDimSimple} holds as written.

\bibliographystyle{plain}

\end{document}